\newcommand{\R}{\mathbb{R}}
\newcommand{\inr}[1]{\left\langle #1 \right\rangle}
\newcommand{\PP}{\mathbb{P}}
\newtheorem{theorem}{Theorem}[section]
\theoremstyle{definition}
\newtheorem{definition}[theorem]{Definition}
\newtheorem{assumption}[theorem]{Assumption}
\newtheorem{example}[theorem]{Example}
\newtheorem{remark}[theorem]{Remark}
\newtheorem{question}[theorem]{Question}
\newcommand{\E}{\mathbb{E}}
\renewcommand{\P}{\mathbb{P}}
\numberwithin{equation}{section}
\newcommand{\eps}{\varepsilon}
\begin{document}

\title{Uniform mean estimation via generic chaining}

\author{Daniel Bartl}
\address{Department of Mathematics, Department of Statistics and Data Science, National University of Singapore}
\email{bartld@nus.edu.sg}
\author{Shahar Mendelson}
\address{Department of Mathematics,  Texas A\&M University}
\email{shahar.mendelson@gmail.com}
\date{\today}

\begin{abstract}
We introduce an empirical functional $\Psi$ that is an optimal uniform mean estimator: Let $F\subset L_2(\mu)$ be a class of mean zero functions, $u$ is a real valued function, and $X_1,\dots,X_N$ are independent, distributed according to $\mu$. We show that under minimal assumptions, with $\mu^{\otimes N}$ exponentially high probability, 
\[ \sup_{f\in F} |\Psi(X_1,\dots,X_N,f) - \mathbb{E} u(f(X))|
\leq c R(F) \frac{ \mathbb{E} \sup_{f\in F } |G_f| }{\sqrt N}, \]
where $(G_f)_{f\in F}$ is the Gaussian processes indexed by $F$ and $R(F)$ is an appropriate notion of `diameter' of the class $\{u(f(X)) : f\in F\}$.

The fact that such a bound is possible is surprising, and it leads to the solution of various key problems in high dimensional probability and high dimensional statistics.
The construction is based on combining Talagrand's generic chaining mechanism with optimal mean estimation procedures for a single real-valued random variable.
\end{abstract}

\maketitle
\setcounter{equation}{0}
\setcounter{tocdepth}{1}

\maketitle
\section{Introduction}
Empirical processes theory was developed as an attempt to obtain uniform versions of the fundamental limit laws of probability theory --- leading to the uniform law of large numbers; the uniform central limit theorem; and the uniform law of the iterated logarithm (see the books \cite{dudley2014uniform,geer2000empirical,gine2021mathematical,pollard1990empirical,vaart1996weak} for detailed surveys on these topics). 
However, over the last 25 years the focus of the theory has shifted --- because it has become apparent that quantitative estimates and not the limit behaviour of empirical processes are of central importance in Data Science. 
As a result, most of the effort in the recent study of empirical processes theory has been devoted to the following question: 
\begin{question} \label{qu:empirical-intro}
Let $(\Omega,\mu)$ be a probability space and consider $F \subset L_2(\mu)$, consisting of mean-zero functions. Set $u:\R \to \R$ that satisfies $u(0)=0$ and let 
$$
u(F) = \left\{u\circ f : f \in F\right\}.
$$ 
Let $X,X_1,...,X_N$ be independent, distributed according to $\mu$. Find sharp (high probability and in expectation) bounds on  
\begin{equation} \label{eq:empirical-intro-0}
\sup_{f \in F} \left| \frac{1}{N}\sum_{i=1}^N u\left(f(X_i)\right) - \E u(f(X)) \right| \equiv \|\PP_N-\PP\|_{u(F)}.
\end{equation}
\end{question}
Specifically, the main interest in an estimate on \eqref{eq:empirical-intro-0} is on the way the supremum $\|\PP_N-\PP\|_{u(F)}$ depends on the geometric structure of $F$---rather than on that of the class $u(F)$---, and on $N$. 

The case that received the most attention (other than the obvious choice of $u(t)=t$ which corresponds to the empirical process indexed by the class $F$ itself), is $u(t)=t^2$. 
The \emph{quadratic empirical process} appears naturally in central questions in Probability, Asymptotic Geometric Analysis and Statistics.
 In some of those applications the empirical mean $\frac{1}{N}\sum_{i=1}^N u\left(f(X_i)\right)$ is the object of interest---for example, when $X$ is an isotropic random vector in $\R^d$ (i.e., $X$ is centred and ${\rm Cov}(X)={\rm Id}$), and $F=\{\inr{\cdot,z} : z \in S^{d-1}\}$; in that case,  
$$
\|\PP_N - \PP\|_{u(F)} = \sup_{z \in S^{d-1}} \left| \frac{1}{N}\sum_{i=1}^N \inr{X_i,z}^2- 1 \right|
$$
captures the behaviour of the extremal singular values of the random matrix  whose rows are $X_i/\sqrt{N}$. 

Although using the empirical mean $\PP_N u(f)$ as an `educated guess' of the mean $\E u(f)$ is an obvious choice of a functional, by now it is well understood that it is a poor one. At the same time, there is a variety of applications where the goal is not the study of the empirical mean, but rather to obtain a uniform estimates on the means $\{\E u(f(X)) : f \in F\}$; in Sections \ref{sec:application.Lp} and \ref{sec:corruption} we present examples of that flavour that have been studied extensively in recent years. 

Over the years, several attempts have been made to construct uniform mean estimators for arbitrary classes of functions that outperform the empirical mean (notable constructions can be found in \cite{abdalla2022covariance,catoni2012challenging,lugosi2019sub,lugosi2024multivariate}). 
And while these estimators do outperform the empirical mean,  they either rely on strong structural assumptions on $F$ and exploit those heavily, or attempt to handle general classes $F$ but at the expense of suboptimal performance.

\vspace{0.5em}

To be more accurate, a \emph{mean estimation procedure} is a real-valued functional $\Psi$  that receives as input an $N$-tuple $X_1,\dots,X_N \in \Omega$, a function $f$ and a parameter $\delta$. 
The idea is that if $X_1,\dots,X_N$ are selected according to $\mu^{\otimes N}$, then with probability at least $1-\delta$, $\left| \Psi_\delta(X_1,\dots,X_N,f) - \E u(f) \right|$ is `small'; specifically, there is an event of $\mu^{\otimes N}$-probability at least $1-\delta$ on which one has sufficient control on 
\[
\sup_{f \in F} \left| \Psi_\delta(X_1,\dots,X_N,f) - \E u(f) \right|.
\]

The main goal of this note is to construct an \emph{optimal} uniform mean estimator for an arbitrary class $F$ and a general $u$ under minimal assumptions. As a result, our focus is on the following alternative to Question \ref{qu:empirical-intro}.

\begin{tcolorbox}
\begin{question} \label{qu:main-intro}
Let $X_1,...,X_N$ be independent, distributed according to $\mu$. Given $\eps>0$ and $0<\delta<1$, is there a functional $\Psi$ that receives as data the points $X_1,...,X_N$ and the wanted confidence parameter $\delta$, and on an event with probability at least $1-\delta$ with respect to $\mu^{\otimes N}$, returns $\Psi_{\delta}(X_1,...,X_N,f)$ for which 
\begin{align}
\label{eq:error.intro}
\sup_{f \in F} \left| \Psi_{\delta}(X_1,...,X_N,f) - \E u(f) \right| \leq \eps?
\end{align}
\end{question}
\end{tcolorbox}

Setting our aim high, one may consider a wildly optimistic conjecture regarding a possible  answer to Question \ref{qu:main-intro}: that the error in \eqref{eq:error.intro} should scale as 
\begin{equation} \label{eq:nice-perform}
\sim {\rm diam}(u(F)) \frac{\E\sup_{f \in F}G_f}{\sqrt{N}},
\end{equation}
where $\{G_f : f \in F\}$ is the centred Gaussian process indexed by $F \subset L_2(\mu)$ whose covariance is endowed by $L_2(\mu)$\footnote{That is,  ${\rm  Cov}(G_f,G_h) = \|f-h\|_{L_2}$ for every $f,h\in F$.}, and ${\rm diam}(u(F))$ is some natural notion of the diameter of the class $u(F)$. 

The estimate \eqref{eq:nice-perform} is, in fact, true when $u(t)=t^2$ and the class consists of mean zero functions and satisfies a subgaussian condition: namely, that for every $f,h \in F \cup \{0\}$ and every $q \geq 2$,
\begin{equation} \label{eq:subgaussian}
\|f-h\|_{L_q} \leq L\sqrt{q}\|f-h\|_{L_2}.
\end{equation}
Setting $d_F=\sup_{f \in F} \|f\|_{L_2}$, one can show that for a subgaussian, centrally symmetric\footnote{That is, if $f \in F$ then $-f \in F$.} class $F$, if $N \geq c_0(\frac{ \E\sup_{f \in F}G_f }{d_F})^2$, then
\begin{equation} \label{eq:subgaussian-est-2}
\E \sup_{f \in F} \left|\frac{1}{N} \sum_{i=1}^N f^2(X_i) - \E f^2 \right| 
\leq c_1(L) d_F \frac{\E \sup_{f \in F} G_f}{\sqrt{N}},
\end{equation}
and that \eqref{eq:subgaussian-est-2} is the best estimate one can hope for.
The proof of \eqref{eq:subgaussian-est-2} and other results on empirical processes involving subgaussian classes can be found in \cite{mendelson2016upper}. 

Unfortunately, even for $u(t)=|t|^p$, this phenomenon---that the empirical mean $\PP_N u(f)$ has the error \eqref{eq:nice-perform}---, does not go beyond the case $p=2$ in the sense that if $u(t)=|t|^p$ for some $p>2$ (even very close to 2), then $ \sup_{f \in F} |\frac{1}{N} \sum_{i=1}^N u(f(X_i)) - \E u(f) |$ can be much bigger than \eqref{eq:nice-perform}.

Indeed, let $X=(g_1,...,g_d)$ be the standard Gaussian random vector in $\R^d$, put $F=\{\inr{\cdot, z} : z \in S^{d-1}\}$,  and denote the Euclidean norm by $\|\cdot\|_2$.
Note that $\{G_f : f\in F\} = \{\inr{X,z} : z\in S^{d-1}\}$, that $\sup_{z\in S^{d-1}} \inr{X,z} = \|X\|_2$, and that $\E\sup_{f\in F} G_f =\E \|X\|_2 \sim\sqrt{d}$. 
In fact, it is standard to verify that $\|X\|_2$ concentrates sharply around $\sqrt{d}$ (for our purposes it is enough that $c_1 \sqrt d \leq \|X\|_2 \leq c_2\sqrt d$ holds with probability at least $1-2\exp(-c_3d)$).
Moreover,  for every $z \in S^{d-1}$, $\E |\inr{X,z}|^{p} =\E|g_1|^{p} \leq (c_4\sqrt p)^p$,  implying that ${\rm diam}(u(F)) \sim c_5(p)$. 
Therefore, if an estimate of the form \eqref{eq:subgaussian-est-2} were true, we would have that for $d = \beta N$ and for typical realizations of $(X_i)_{i=1}^N$,
$$
\sup_{z \in S^{d-1}} \left| \frac{1}{N}\sum_{i=1}^N |\inr{X_i,z}|^{p}- \E |\inr{X,z}|^{p} \right| 
\leq c_6(p)\sqrt{\frac{d}{N}} 
=c_6(p) \sqrt{\beta}.
$$
However,  with probability at least $1-2\exp(-c_3d)$, 
\begin{align}
\nonumber
\sup_{z \in S^{d-1}} \left| \frac{1}{N}\sum_{i=1}^N |\inr{X_i,z}|^{p}- \E |\inr{X,z}|^{p} \right| 
&\geq \sup_{z \in S^{d-1}} \frac{|\inr{X_1,z}|^{p} }{N}   - (c_4\sqrt p)^p \\
\nonumber
&= \frac{\|X_1\|_{2}^{p}}{N}-  c(p) 
 \geq  \frac{(c_1\sqrt d)^p}{N}  - c(p) \\
&=  c_1^p\beta^{p/2} \cdot N^{p/2-1} -  c(p).
\nonumber
\end{align}

Similar examples show that even for $p=2$, if $F$ satisfies a slightly weaker norm equivalence than \eqref{eq:subgaussian}, the empirical mean functional $\frac{1}{N}\sum_{i=1}^N f^2(X_i)$ does not satisfy \eqref{eq:nice-perform}. 
The reason behind both facts is that the empirical mean performs well only in `light-tailed' situations, and if the function $u(t)$ grows too quickly (and here, slightly faster than quadratic is already `too quickly' even if the starting point is Gaussian), the problem is more heavy-tailed than the empirical mean can handle. Since our main interest is dealing with classes that consist of functions that might be truly heavy-tailed --- say, with a tail behaviour $~\sim 1/t^4$, the empirical mean is not a viable option as far as the wildly optimistic conjecture goes.

What is equally significant is that there were no indications that \eqref{eq:nice-perform} had any chance of being true. The alternative uniform mean estimators that have been considered over the years never came close to exhibiting a subgaussian error for an arbitrary class of functions (see, e.g.\ \cite{lugosi2019near} or the survey \cite{lugosi2019mean} and the references therein).

\begin{tcolorbox}
Despite all that, the main result of this note is an affirmative answer to Question \ref{qu:main-intro}: we introduce a uniform mean estimator that attains a subgaussian error even in heavy-tailed scenarios. 
\end{tcolorbox}

It is important to emphasize at this point that the feasibility of the the estimator designed in what follows is a question of independent interest.
Nevertheless, the mere existence of such an estimator is both surprising and significant. 

We return to this issue again in Remark \ref{rem:computation} and in Section~\ref{sec:computation}.

\subsection{Assumptions and the main result}

Before formulating the main result, let us present the assumptions we need. 

\begin{assumption} 
\label{ass:iso-distance-orace}
One is given  $\kappa >0$ and a functional ${\mathbbm \rho}$ which satisfies that for every $f,h \in F \cup \{0\}$,
$$
\frac{1}{\kappa}\|f-h\|_{L_2} \leq {\mathbbm \rho}(f,h) \leq \kappa \|f-h\|_{L_2}.
$$
\end{assumption}

Thanks to Assumption \ref{ass:iso-distance-orace}, there is some additional, rather crude, apriori information on distances between functions in $F$, as well as on their $L_2$ norms. Here, the constant $\kappa$ should be viewed as a large number --- much bigger than $1$. As a result, the functional ${\mathbbm \rho}$ is likely to distort the true distances and norms significantly. 

\begin{remark}
It is important to note that there are problems in which the $L_2$ structure of $F$ is known (as is the case in the example presented in Section \ref{sec:application.Lp}). When that happens, Assumption \ref{ass:iso-distance-orace} it trivially satisfiesd.
\end{remark}

The second assumption we require is a weak norm equivalence satisfied by $F \cup\{0\}$, and a rather minor assumption on the function $u$: 

\begin{assumption} \label{ass:other}
The class $F$ is centrally symmetric; each function $f\in F$ has mean zero; and there is a constant $L$ for which, for every $f,h \in F \cup \{0\}$, 
$$
\|f-h\|_{L_4} \leq L \|f-h\|_{L_2}.
$$
Also, $u(0)=0$, there is an increasing function $v:\R_+ \to \R_+$ that satisfies for every $s,t \in \R$,
$$
|u(s)-u(t)| \leq v(|s|+|t|) \cdot |s-t|,
$$
and $\sup_{f \in F } \E v^4(2|f|) < \infty$.  
\end{assumption}

To put Assumption \ref{ass:other} in some context, the norm equivalence condition is minimal: it is still possible that functions in $F$ do not have a finite $4+\eps$ moments, and in particular they can be heavy-tailed. 
The central symmetry of $F$ is only assumed to ease notation---in the general case one may replace $F$ by $F \cup -F$.

The second part of Assumption \ref{ass:other} prohibits $v$ from growing `too quickly' relative to the tail behaviour of elements in $F$. For example, if $u(t)=|t|^p$, one may set $v(t) = p |t|^{p-1}$, and the integrability assumption holds if $F $ is a bounded subset of $L_{4(p-1)}$. That is not significantly more restrictive than the bare-minimum needed to ensure that Question \ref{qu:main-intro} makes sense, namely that $\sup_{f \in F} \E |u(f)| =\sup_{f \in F} \E |f|^p < \infty$.

\begin{remark}
Clearly, if $u$ is smooth and convex (as is the case in most interesting applications), one can choose $v=u^\prime$. The integrability assumption on $v$ is there to balance  the tail behaviour of functions in $F$ and the speed at which $u^\prime$ grows. 
\end{remark}

From here on, set 
$$
R(F) = \sup_{f\in F } \left(\E v^4(2|f|)\right)^{1/4}, \ \ \ \ \ d_F = \sup_{f \in F} \|f\|_{L_2},
$$
and let
$$
D^*(F) = \left(\frac{\E \sup_{f \in F} G_f}{d_F}\right)^2 
$$
be the \emph{critical dimension} of $F$.

\begin{remark}
The critical dimension is a natural object. It plays an instrumental role in Gaussian concentration \cite{artstein2015asymptotic,pisier1986probabilistic}; in the Dvoretzky-Milman theorem \cite{artstein2015asymptotic,milman1971new,pisier1986probabilistic}; and in many other situations. 

Roughly put, if the sample size $N$ is smaller than the critical dimension there simply isn't enough `mixing' to overcome the richness of $F$; as a result, the given random data does not suffice for accurate mean estimation in $F$.
\end{remark}

The main result of this article is as follows:

\begin{tcolorbox}
\begin{theorem} \label{thm:main}
If Assumptions \ref{ass:iso-distance-orace} and \ref{ass:other} are satisfied, there is an absolute constant $c_1$ and constants $c_2,c_3$ that  depend on $\kappa$ and $L$ for which the following holds.

For every  $\delta>\exp(-c_1N)$, there is a procedure $\Psi_\delta$ which satisfies that with probability at least $1-\delta$, 
\begin{align} \label{eq:main-2} 
\sup_{f\in F}\left| \Psi_\delta(X_1,...,X_N,f) - \E u(f) \right| 
 \leq c_2 R(F) \cdot  \left(\frac{\E \sup_{f \in F} G_f}{\sqrt{N}} + d_F \sqrt{\frac{\log(1/\delta)}{N}} \right).
\end{align}

In particular, with probability at least $1-\exp(-c_1\min\{D^*(F),N\})$, 
\begin{equation} \label{eq:main-1}
 \sup_{f\in F} \left| \Psi_\delta(X_1,...,X_N,f) - \E u(f) \right| 
\leq c_3 R(F) \cdot \frac{\E \sup_{f \in F} G_f}{\sqrt{N}}.
\end{equation}
\end{theorem}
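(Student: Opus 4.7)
The plan is to realise the two ingredients hinted at in the abstract --- Talagrand's generic chaining and an optimal one-dimensional mean estimator --- as a single chained functional. The building block is a single-variable robust estimator (median-of-means or Catoni's M-estimator): given i.i.d.\ copies of a real-valued $Y \in L_2$ and confidence $\delta \geq e^{-cN}$, there is a data-driven $\hat\mu_\delta(Y)$ with $|\hat\mu_\delta(Y) - \mathbb{E}Y| \leq c\,\sigma(Y)\sqrt{\log(1/\delta)/N}$ with probability $\geq 1-\delta$. The idea is not to apply this to $u(f)$ for every $f \in F$ (which would lose the correct dependence on the geometry of $F$), but to apply it to the \emph{increments} $u(\pi_n(f)) - u(\pi_{n-1}(f))$ along an admissible chain, with a carefully chosen, level-dependent confidence parameter.

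\textbf{Construction.} Because $\mathbbm{\rho}$ is $\kappa$-equivalent to $\|\cdot\|_{L_2}$ on $F \cup \{0\}$, it can be used in place of the (unknown) $L_2$ metric to build --- without looking at the data --- a Talagrand admissible sequence $(F_n)_{n \geq 0}$ with $|F_0|=1$, $|F_n|\leq 2^{2^n}$, such that the nearest-point maps $\pi_n:F \to F_n$ (in $\mathbbm{\rho}$) satisfy, by the majorizing measures theorem,
$$\sup_{f\in F}\sum_{n \geq 0} 2^{n/2}\|\pi_n(f) - \pi_{n-1}(f)\|_{L_2} \leq c(\kappa)\,\mathbb{E}\sup_{f\in F} G_f.$$
For each level $n\geq 1$ and each of the $\leq 2^{2^{n+2}}$ admissible pairs $(h',h)\in F_{n-1}\times F_n$, let $\hat\mu_{h',h}$ be the single-variable robust estimator of $\mathbb{E}[u(h)-u(h')]$ at confidence $\delta_n := c\delta\, n^{-2} 2^{-2^{n+2}}$, and let $\hat\mu_{h_0}$ estimate $\mathbb{E}u(h_0)$ for the unique $h_0\in F_0$ at confidence $\delta$. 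Define the chained estimator
$$\Psi_\delta(X_1,\ldots,X_N,f) := \hat\mu_{h_0} + \sum_{n\geq 1}\hat\mu_{\pi_{n-1}(f),\pi_n(f)}.$$

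\textbf{The analytic key.} For any $g,g'\in F\cup\{0\}$, Assumption~\ref{ass:other}, Cauchy--Schwarz and monotonicity of $v$ combine to give
$$\|u(g)-u(g')\|_{L_2} \leq \|v(|g|+|g'|)\|_{L_4}\|g-g'\|_{L_4} \leq c(L)\,R(F)\,\|g-g'\|_{L_2}.$$
This single estimate is exactly what transports the geometry of $F \subset L_2(\mu)$ (captured by the admissible sequence above) onto the shifted class $u(F)$: the standard deviation of every increment along the chain is dominated by $R(F)$ times its $L_2$-length in $F$ itself. It is the place where the $L_4/L_2$ norm equivalence and the integrability of $v(2|f|)$ both enter, and I expect this is the point that forces the assumptions to look exactly as they do.

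\textbf{Putting it together and main obstacle.} Union-bounding over all admissible pairs, the failure probability is at most $\sum_n 2^{2^{n+2}}\delta_n \lesssim \delta$; on the good event, a telescoping argument (whose convergence uses $\pi_n(f)\to f$ in $L_2$ together with the $L_4$-Lipschitz control on $u$) yields
$$|\Psi_\delta(X,f) - \mathbb{E}u(f)| \leq c\sum_{n\geq 0}\|u(\pi_n(f))-u(\pi_{n-1}(f))\|_{L_2}\Bigl(\sqrt{\tfrac{2^n}{N}} + \sqrt{\tfrac{\log(1/\delta)}{N}}\Bigr).$$
Inserting the analytic key, the first half of the sum becomes $c(\kappa,L)R(F)\,\mathbb{E}\sup_f G_f/\sqrt{N}$ by majorizing measures, while the second half is treated by splitting the chain at the level $n_\delta$ where $2^{n_\delta}\asymp \log(1/\delta)$: for $n\leq n_\delta$ one uses the (uniform) diameter bound $\|\pi_n(f)-\pi_{n-1}(f)\|_{L_2}\leq c(\kappa)d_F$ together with the fact that only $\lesssim n_\delta$ levels contribute, and for $n>n_\delta$ the $2^n$ term already dominates. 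This gives \eqref{eq:main-2}; \eqref{eq:main-1} follows by substituting $\delta = \exp(-c_1\min\{D^*(F),N\})$ and using the definition of the critical dimension to absorb the deviation term into the Gaussian term. The main obstacle I anticipate is the careful two-scale bookkeeping of the level-dependent confidence parameter $\delta_n$ described above: naively choosing $\delta_n$ produces an unwanted $\log\log(1/\delta)$ factor, and avoiding it requires treating the short and long portions of the chain by slightly different union bounds while still matching them at level $n_\delta$.
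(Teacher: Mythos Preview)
Your strategy is exactly the paper's: chain along an admissible sequence built via $\mathbbm{\rho}$, apply a one-dimensional sub-gaussian mean estimator to each increment $u(\pi_{s+1}f)-u(\pi_sf)$ at a level-dependent confidence, and use the inequality $\|u(g)-u(g')\|_{L_2}\lesssim_L R(F)\|g-g'\|_{L_2}$ to convert the chaining sum back to $\gamma_2(F,L_2)$. Two points in your sketch are loose, and the paper's execution resolves both cleanly.

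First, the chain cannot run to infinity: the estimator $\psi_\delta$ of Theorem~\ref{thm:mean-estimation} exists only for $\delta\ge e^{-cN}$, so your $\delta_n\sim 2^{-2^{n+2}}$ is admissible only while $2^n\lesssim N$. The paper truncates at $s_1$ with $2^{s_1}\sim N$ and bounds the residual $|\E u(f)-\E u(\pi_{s_1}f)|\lesssim R(F)\|f-\pi_{s_1}f\|_{L_2}\lesssim R(F)\,\E\sup_{f\in F} G_f/\sqrt{N}$ deterministically, using the tail of the admissible sum. Second, the $\log\log(1/\delta)$ obstacle you flag is avoided not by a more delicate union bound but by \emph{starting} the chain at level $s_0$ with $2^{s_0}\sim\log(1/\delta)$ and taking $\delta_s=\exp(-2^{s+4})$ \emph{independently of $\delta$}. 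The anchor then estimates $\E u(\pi_{s_0}f)$ over the $\le 2^{2^{s_0}}$ elements of $F_{s_0}$, contributing the single term $\lesssim R(F)\,d_F\,2^{s_0/2}/\sqrt{N}\sim R(F)\,d_F\sqrt{\log(1/\delta)/N}$; every subsequent increment contributes only $\lesssim R(F)\,2^{s/2}\|\Delta_s f\|_{L_2}/\sqrt{N}$ with no $\delta$-dependence at all, and these sum to the Gaussian width. In effect this collapses your ``short portion'' $n\le n_\delta$ into a single anchor term and removes the $n_\delta\sim\log\log(1/\delta)$ many $d_F$-sized summands that generate the unwanted factor.
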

\end{tcolorbox}

\begin{example}
Let us return to the case $u(t)=|t|^2$.  A straightforward application of Theorem \ref{thm:main} shows that under the $L_4-L_2$ norm equivalence, with probability at least $1-\exp(-c_1\min\{D^*(F),N\})$,
$$
\sup_{f \in F} \left| \Psi(X_1,...,X_N,f) - \E f^2 \right| 
\leq c_2(L,\kappa)  \cdot d_F \frac{\E \sup_{f \in F} G_f}{\sqrt{N}}.
$$
This coincides with the optimal  subgaussian error estimate, but holds for a class of heavy-tailed functions.
\end{example}

\begin{remark}
\label{rem:computation} 
It is important to stress that the procedure $\Psi_\delta$ introduced as part of the proof of Theorem~\ref{thm:main} is not computationally friendly. 
Its definition requires, as input, an almost optimal admissible sequence for $\gamma_2(F, \|\cdot\|_{L_2})$ (see Section~\ref{sec:generic.chaining} for the definition and more details), something that can be done---in theory---using Assumption~\ref{ass:iso-distance-orace}.
The construction of an optimal admissible sequence is a deterministic question and depends solely on the geometry of the space $(F, \rho)$. 
As such, Theorem \ref{thm:main} reveals a surprising phenomenon: that uniform mean estimation can be decoupled into two distinct challenges. 
The first is a deterministic one: understanding the geometry of the space $(F, \rho)$ and constructing an appropriate admissible sequence. 
This aspect remains open  in general, but is tractable in many concrete cases. 
The second is the statistical challenge: given access to such an admissible sequence, to construct a mean estimator with strong performance guarantees.
The focus of this article is on solving the latter problem.

Having said that, as we explain in Section~\ref{sec:computation}, constructions  of almost optimal admissible sequences are known for many important and relevant examples. Moreover, slightly sub-optimal admissible sequences are much easier to construct and result in a  marginal deterioration in the performance of the resulting estimator.
\end{remark}

Taking into account all the effort that has been invested over the years in the study of uniform mean estimation procedures  that were not only suboptimal but also valid only in rather specific instances of Question \ref{qu:main-intro}, it would be natural to expect that the proof of Theorem \ref{thm:main} should be hard. 
In fact, the proof is rather simple once the right mechanism is introduced: a combination of an optimal mean estimation procedure for a real-valued random variable with Talagrand's generic chaining mechanism. 

\vspace{0.5em}

There are numerous applications to Theorem \ref{thm:main}, but we will focus on two of them: approximating the $L_p$ structure endowed on $\R^d$ by an isotropic log-concave measure, and covariance estimation when the given data is heavy tailed and corrupted. These applications are presented in Sections \ref{sec:application.Lp} and  \ref{sec:corruption}.
We also note that related ideas based on modifications of Talagrand's generic chaining have recently been applied in \cite{bartl2025we} to obtain Gaussian-type error bounds for statistical learning problems.

\vspace{1em}

\noindent
\emph{Notation:} Throughout, $c, c_0, c_1$ denote strictly positive absolute constants that may change their value at each appearance.
If a constant $c$ depends on a parameter $\alpha$, that is denoted by $c = c(\alpha)$. 
We write $x\lesssim y$ if there is an absolute constant $c$ for which $x\leq cy$ and $x\sim y$ if both $x \lesssim y$ and $y \lesssim x$.
Moreover, $x\lesssim_\alpha y$  means  $x\leq c(\alpha) y$ and similarly for $\sim_\alpha$.

\begin{remark}
To avoid digressing into well-understood technical issues (such as measurability), we shall assume that $F$ contains a countable subset that can be used to approximate pointwise every function in $F$.
\end{remark}

\section{Preliminaries} \label{sec:pre}

Our starting point is a classical question in Statistics: whether the mean of a random variable can be estimated from random data. This question has been completely resolved, and there are various procedures that achieve the optimal behaviour:

\begin{theorem} \label{thm:mean-estimation}
There are absolute constants $c_1$ and $c_2$ for which the following holds. For every $\exp(-c_1N) \leq \delta \leq 1/2$ there is a mapping $\psi_\delta:\R^N \to \R$ which satisfies that for any random variable $X$ with finite mean and variance, with probability at least $1-\delta$, 
\begin{equation} \label{eq:mean-estimation}
\left|\psi_\delta(X_1,...,X_N) - \E X \right|
 \leq c_2 \sigma_X \sqrt{\frac{\log(1/\delta)}{N}}, 
\end{equation}
where $\sigma_X^2$ is the variance of $X$. 

\end{theorem}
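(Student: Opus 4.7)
The plan is to construct $\psi_\delta$ via the classical \emph{median-of-means} (MoM) estimator. Fix $k := \lceil c_0 \log(1/\delta) \rceil$ for an absolute constant $c_0 > 0$ to be chosen; the hypothesis $\delta \geq \exp(-c_1 N)$ with $c_1$ sufficiently small ensures $k \leq N/2$. Partition $\{1,\dots,N\}$ into $k$ disjoint blocks $B_1,\dots,B_k$ with $|B_j| \geq m := \lfloor N/k\rfloor \gtrsim N/\log(1/\delta)$, set $\bar X_j := \frac{1}{|B_j|}\sum_{i \in B_j} X_i$, and define
\[
\psi_\delta(X_1,\dots,X_N) := \mathrm{median}(\bar X_1,\dots,\bar X_k).
\]

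Set $r := 4\sigma_X/\sqrt{m}$, so that $r \lesssim \sigma_X \sqrt{\log(1/\delta)/N}$. Each $\bar X_j$ is independent, has mean $\E X$, and variance at most $\sigma_X^2/m$; Chebyshev's inequality therefore gives $\P(|\bar X_j - \E X| > r) \leq 1/16$. Consequently, the indicators $Z_j := \IND\{|\bar X_j - \E X| > r\}$ are iid Bernoulli with parameter $p \leq 1/16$. The key structural observation -- and essentially the only nontrivial step -- is that whenever $\sum_{j=1}^k Z_j < k/2$, more than half of the $\bar X_j$ lie in $[\E X - r, \E X + r]$, which forces the median into the same interval. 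Hence
\[
\P\bigl(|\psi_\delta(X_1,\dots,X_N) - \E X| > r\bigr) \leq \P\Bigl(\sum_{j=1}^k Z_j \geq k/2\Bigr).
\]

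A Chernoff (or Hoeffding) bound applied to the iid Bernoulli$(p)$ sum on the right, using $p \leq 1/16 < 1/2$, yields $\P(\sum_j Z_j \geq k/2) \leq \exp(-c_2 k)$ for an absolute constant $c_2 > 0$. Choosing $c_0 := 1/c_2$ makes this probability at most $\delta$, and on the complementary event one has $|\psi_\delta(X_1,\dots,X_N) - \E X| \leq r \leq c \sigma_X \sqrt{\log(1/\delta)/N}$, which is the claim. I don't anticipate any genuine obstacle here; the proof amounts to Chebyshev-per-block plus a majority vote, and the only thing to keep track of is the order in which the absolute constants ($c_2$, then $c_0$, then $c_1$) are fixed so that $k$ is an integer in $[1, N/2]$ for every admissible $\delta$.
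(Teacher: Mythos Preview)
Your argument is correct and is precisely the median-of-means proof that the paper points to: the paper does not supply its own proof of this theorem but cites it as a known result, naming the Median of Means as the first procedure achieving the bound. Your Chebyshev-per-block plus Hoeffding-on-indicators argument is the standard derivation and matches the references given.
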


\begin{remark}
It is standard to show that \eqref{eq:mean-estimation} is the best one can hope for even if the random variable is known to be Gaussian. 
\end{remark}

The first procedure satisfying this subgaussian error estimate was the \emph{Median of Means},  introduced  by Nemirovsky and Yudin \cite{nemirovskij1983problem} (see also Alon, Matias, and Szegedy \cite{alon1999space} and Jerrum, Valiant,  and Vazirani \cite{jerrum1886random}).
 We provide the short proof of this fact in Appendix \ref{sec:MOM.1d}, and refer to \cite{lugosi2019mean} for a recent survey on mean estimation procedures in $\R$ and in $\R^d$.

We will use optimal mean estimation procedures $\psi_\delta$ as a `black box'. The key in the proof of Theorem \ref{thm:main} is aggregating those estimators in the right way, and as it happens that aggregation is dictated by Talagrand's generic chaining mechanism. 

\subsection{Generic chaining}
\label{sec:generic.chaining}

Generic chaining was introduced as a way of controlling the behaviour of the supremum of a stochastic process $\{Z_t : t \in T\}$, where each $Z_t$ is a mean-zero random variable. The intuitive idea behind generic chaining is to consider approximating sets $T_s \subset T$ that become finer as $s$ grows, and set $\pi_s : T \to T_s$ to be suitable metric projection functions. Writing
$$
Z_t -Z_{\pi_0 t}= \sum_{s \geq 0} \left(Z_{\pi_{s+1}t} - Z_{\pi_s t}\right),
$$
and assuming that $|T_0|=1$, one may control $\E\sup_{t \in T} Z_t = \E \sup_{t \in T}(Z_t-Z_{t_0})$ by ensuring that with high probability each `link' $Z_{\pi_{s+1}t} - Z_{\pi_s t}$ is not `too big'. Combining such individual estimates with the fact that the number of possible links at the $s$-stage is at most $|T_s| \cdot |T_{s+1}|$ leads to the wanted estimate by using the union bound.  

Naturally, this idea is only the starting point of generic chaining theory. For one, $T$ is just an indexing set, and the right notion of `approximation' or `metric' is not obvious off-hand. Also, even if it is possible to establish an upper bound on $\E\sup_{t \in T} Z_t$ using generic chaining, it is not clear at all that the resulting bound is sharp. 
As it happens, when the random process $\{Z_t : t \in T\}$ is Gaussian, the chaining mechanism gives a complete characterization of $\E \sup_{t \in T} Z_t$. 

Before we formulate that striking characterization we need to introduce some elements of the generic chaining theory; those will also be required for the proof of Theorem \ref{thm:main}.

For a comprehensive exposition on generic chaining we refer the reader to Talagrand's treasured book \cite{talagrand2022upper}.

\begin{definition} \label{def:admissible}
An admissible sequence of a set $T$ is a collection $(T_s)_{s \geq 0}$ of subsets of $T$, satisfying that  $|T_s| \leq 2^{2^{s}}$ and $|T_0|=1$. If $(T,d)$ is a metric space, let $\pi_s t$ be a nearest point to $t$ in $T_s$ (with respect to the metric $d$). Finally, let
$$
\gamma_2(T,d) = \inf_{(T_s)_{s\geq 0}} \sup_{t \in T} \sum_{s \geq 0} 2^{s/2} d(t,\pi_st),
$$
where the infimum is taken with respect to all admissible sequences of $T$. 
\end{definition}

\begin{theorem}
\label{thm:mm}
There are absolute constants $c_1$ and $c_2$ for which the following holds. Let $\{G_t : t \in T\}$ be a centred Gaussian process, and endow $T$ with the $L_2$ metric, i.e., set $d(t,t^\prime) = \|G_t - G_{t^\prime}\|_{L_2}$. Then
$$
c_1 \gamma_2(T,d) \leq \E \sup_{t \in T} G_t \leq c_2 \gamma_2(T,d). 
$$
\end{theorem}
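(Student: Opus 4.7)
The statement is the two-sided majorizing measures theorem. The upper bound is the standard generic chaining argument, while the lower bound is Talagrand's deep theorem, so the two directions require very different treatments. For the \textbf{upper bound}, fix any admissible sequence $(T_s)_{s\geq 0}$ and let $\pi_s:T\to T_s$ denote nearest-point maps in $d$. Pick a base point $t_0\in T_0$ and telescope $G_t-G_{t_0}=\sum_{s\geq 0}(G_{\pi_{s+1}t}-G_{\pi_s t})$; each increment is a centred Gaussian of standard deviation at most $d(\pi_{s+1}t,\pi_s t)\leq 2\,d(t,\pi_s t)$. Since there are at most $|T_s|\cdot|T_{s+1}|\leq 2^{2^{s+2}}$ such links at level $s$, a union bound against the Gaussian tail $\P(|g|\geq u)\leq 2e^{-u^2/2}$ with the choice $u\sim 2^{s/2}$ shows that on an event of probability close to $1$, $|G_{\pi_{s+1}t}-G_{\pi_s t}|\lesssim 2^{s/2}\,d(t,\pi_s t)$ for every $s$ and every $t$. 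Summing over $s$, integrating the tail in $u$, and infimizing over admissible sequences yields $\E\sup_{t\in T} G_t\leq c_2\,\gamma_2(T,d)$.

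For the \textbf{lower bound}, the plan is to construct an admissible sequence directly from the Gaussian functional $F(A):=\E\sup_{t\in A}G_t$. The central step is to establish a \emph{growth condition}: there is an absolute constant $c>0$ such that whenever $A\subset T$ contains pairwise disjoint subsets $A_1,\dots,A_m$ of diameter $\leq \alpha/8$ that are mutually at distance $\geq \alpha$, then
\[
F(A)\;\geq\; c\,\alpha\sqrt{\log m}\;+\;\min_{i\leq m} F(A_i).
\]
The leading term is Sudakov minoration applied to one representative from each $A_i$ (itself a consequence of Gaussian anti-concentration). The additive $\min_i F(A_i)$ is the delicate piece: it requires Gaussian concentration to guarantee that the supremum inside each $A_i$ is close to its mean on an event large enough to combine with the across-pieces Sudakov argument, so that one may add the internal fluctuation of the worst $A_i$ to the maximum across representatives.

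Given the growth condition, nested partitions $\mathcal{A}_s$ of $T$ with $|\mathcal{A}_s|\leq 2^{2^s}$ are constructed by a recursive greedy scheme: at each stage peel off from every atom as many well-separated subatoms of geometrically decreasing diameter as the growth condition permits, dumping leftovers into a single ``junk'' atom at the next level. The growth condition forces these diameters to decay geometrically with total budget $\lesssim F(T)$, and selecting one point per atom yields an admissible sequence witnessing $\gamma_2(T,d)\lesssim F(T)=\E\sup_{t\in T} G_t$. The \textbf{main obstacle} is not the probabilistic content---Sudakov minoration and Gaussian concentration are classical---but the additive $\min_i F(A_i)$ term in the growth condition and the combinatorial bookkeeping that turns iterated applications of this condition into partitions respecting the strict cardinality bounds $|\mathcal{A}_s|\leq 2^{2^s}$ while keeping the diameter sum under control uniformly in $t$. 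For the full execution we would follow the self-contained treatment in Talagrand's monograph \cite{talagrand2022upper}.
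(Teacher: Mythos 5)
The paper does not prove Theorem~\ref{thm:mm}; it invokes it as Talagrand's majorizing measures theorem, citing \cite{talagrand1987regularity} and \cite{talagrand2022upper}. So there is no in-paper argument to compare against, and your sketch should be judged against the classical proof, of which it is a faithful high-level outline in both directions.

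One small slip in the upper-bound paragraph: the bound $d(\pi_{s+1}t,\pi_s t)\leq 2\,d(t,\pi_s t)$ does not follow from the triangle inequality alone, because it implicitly uses $d(t,\pi_{s+1}t)\leq d(t,\pi_s t)$, which is guaranteed only if the approximating sets are nested ($T_s\subset T_{s+1}$) --- and an admissible sequence as defined in Definition~\ref{def:admissible} need not be nested. The fix is standard: either pass to a nested admissible sequence (replace $T_s$ by $\bigcup_{r\leq s}T_r$, which still has at most $2^{2^{s+1}}$ elements, so $\gamma_2$ changes only by absolute constants), or bound each link by $d(t,\pi_{s+1}t)+d(t,\pi_s t)$ and reindex, absorbing the extra term into the same $\gamma_2$ sum. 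Either way the conclusion $\E\sup_{t\in T}G_t\leq c_2\gamma_2(T,d)$ is unaffected. Your lower-bound sketch --- Sudakov minoration plus Gaussian concentration assembled into a growth condition of the form $F(A)\geq c\,\alpha\sqrt{\log m}+\min_i F(A_i)$, followed by the recursive peeling scheme that turns it into nested partitions with $|\mathcal A_s|\leq 2^{2^s}$ --- is exactly the route in \cite{talagrand2022upper}, and you are right that the delicate points are the additive $\min_i F(A_i)$ term and the cardinality bookkeeping, not the probabilistic ingredients. Deferring the full execution to Talagrand's monograph is appropriate for a result of this depth.
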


The fact is that the expected supremum of Gaussian processes is a purely metric object---determined by the $\gamma_2$ functional of the indexing set endowed with the right metric---, is nothing short of remarkable. 
There are only a handful of processes other than the Gaussian that have similar metric characterizations, and those characterizations involve more than one metric and other functionals \cite{bednorz2014boundedness, talagrand1994supremum,talagrand2022upper}.

The lower bound in Theorem \ref{thm:mm} is Talagrand's \emph{majorizing measures theorem} \cite{talagrand1987regularity} (see also the presentation in \cite{talagrand2022upper}). It implies that for every indexing set $T$ there is an admissible sequence $(T_s)_{s \geq 0}$ for which 
$$
\sup_{t \in T} \sum_{s \geq 0} 2^{s/2} \|\Delta_s t\|_{L_2} \leq c \E \sup_{t \in T} G_t
$$
where we abuse notation and write $\|\Delta_s t\|_{L_2}=\| G_{\pi_{s+1} t} - G_{\pi_s t}\|_{L_2}$ instead of $d(\pi_{s+1}t,\pi_st)$. 

The construction of an admissible sequence calls for  a solution of an optimization problem that depends on the underlying metric structure---in this case, the $L_2$ one. At this point, let us return to Assumption \ref{ass:iso-distance-orace} --- the existence of an $L_2$ distance oracle. The only instance that oracle is used in the proof of Theorem \ref{thm:main} is to ensure that one has access to an admissible sequence of $F$ that satisfies 
\begin{equation} \label{eq:admissible-1}
\sup_{f \in F} \sum_{s \geq 0}  2^{s/2}\|\Delta_s f\|_{L_2} 
\leq c(\kappa) \E \sup_{f \in F}G_f.
\end{equation}
That is guaranteed because a solution to the optimization problem of finding an optimal admissible sequence for $\gamma_2(F,\rho)$ leads to an almost optimal admissible sequence for $\gamma_2(F,L_2)$. Indeed, for every admissible sequence, 
$$
\sum_{s \geq 0} 2^{s/2}\|\Delta_s f\|_{L_2} \sim_\kappa  \sum_{s \geq 0} 2^{s/2}{\mathbbm \rho}(\pi_{s+1}f,\pi_s f),
$$
implying that the admissible sequence constructed using ${\mathbbm \rho}$ satisfies \eqref{eq:admissible-1}. 

As a result, we will assume from here on that 
\begin{tcolorbox}
One has access to an admissible sequence of $F$ for which  \eqref{eq:admissible-1} holds.
\end{tcolorbox}

As already noted in the introduction, the actual construction of an admissible sequence for $F$ satisfying \eqref{eq:admissible-1} is a nontrivial task. 
We refer the reader to Section~\ref{sec:computation} for a more detailed discussion on such constructions.

\vspace{0.5em}

The key to the proof of the upper bound in Theorem \ref{thm:mm} is a \emph{subgaussian increment condition}: if $\{Z_t : t \in T\}$ is a Gaussian process, then for every $t, t^\prime \in T$ and $u \geq 1$, 
\begin{equation} \label{eq:subgaussian-inc}
\PP \left( \left|Z_t - Z_{t^\prime}\right| \geq u 2^{s/2} \|t-t^\prime\|_{L_2} \right) \leq 2\exp(-cu2^{s}).
\end{equation}
The fact that $\E \sup_{t \in T} Z_t \leq \gamma_2(T,d)$ follows by employing \eqref{eq:subgaussian-inc} for each increment $\Delta_s t = \pi_{s+1} t - \pi_s t$, and using that $|\{\pi_{s+1} t - \pi_s t : t \in T\}| \leq 2^{2^s} \cdot 2^{2^{s+1}} \leq 2^{2^{s+2}}$.

Unfortunately, useful as that may be in the context of Gaussian processes, other processes do not enjoy a subgaussian increment condition like \eqref{eq:subgaussian-inc}. For example, the empirical mean fails miserably in that respect, unless the class in question is subgaussian. What saves the day are optimal mean estimation procedures: thanks to Theorem \ref{thm:mean-estimation} such procedures do satisfy a subgaussian increment condition. That, and the access to an almost optimal admissible sequence is enough for the proof of Theorem \ref{thm:main}.

\section{Proof of Theorem \ref{thm:main}}
Let $(F_s)_{s \geq 0}$ be an admissible sequence as in \eqref{eq:admissible-1}. 
Set $s_1$ to satisfy $2^{s_1} = c_1 N$ for a well-chosen absolute constant $c_1$, and let $s_0<s_1$ to be named in what follows.

For $s_0\leq s < s_1$, set $\delta_s = \exp(-2^{s+4})$ and let $\psi_{\delta_s}$ be a mean estimation procedure that satisfies \eqref{eq:mean-estimation} with the confidence parameter $\delta_s$; such a procedure exists for every $s_0 \leq s < s_1$ thanks to the choice $s_1$. Hence, for any random variable $Z$ and independent copies $Z_1,\dots,Z_N$ of $Z$, with probability at least $1-\exp(-2^{s+4})$, 
\begin{align}
\label{eq:modification.for.corruption}
\left|\psi_{\delta_s} (Z_1,...,Z_N) - \E Z \right| \leq c_2 \frac{2^{s/2}}{\sqrt{N}} \sigma_Z.
\end{align}

Note that for every $f \in F$, we have that 
\begin{align}
\label{eq:split.H(f)}
u(f) = \left(u(f)-u(\pi_{s_1} f)\right) + \sum_{s=s_0}^{s_1-1} \left(u(\pi_{s+1} f) - u(\pi_s f)\right) + u(\pi_{s_0}f).  
\end{align}
With that in mind, set
\begin{align*}
\Psi_s(f) &= \psi_{\delta_s} \left( \big(u(\pi_{s+1} f(X_i)) - u(\pi_s f(X_i))\big)_{i=1}^N\right),  \\
\Psi^\prime_{s_0}(f)& = \psi_{\delta_{s_0}}\left( \big( u(\pi_{s_0}f(X_i)) \big)_{i=1}^N \right),
\end{align*}
and define 
$$
\Psi(f) = \sum_{s=s_0}^{s_1-1} \Psi_s (f) + \Psi^\prime_{s_0}(f).
$$

To show that $\Psi$ performs with the promised accuracy and confidence, let 
$$
H_s = \left\{u(\pi_{s+1} f) - u(\pi_s f) : f \in F \right\},
$$ 
and observe that $|H_s| \leq 2^{2^{s+2}}$. 
By \eqref{eq:modification.for.corruption}, for every $h \in H_s$, with probability at least $1-\exp(-2^{s+4})$, 
\begin{equation} \label{eq:in-proof-1}
\left|\psi_{\delta_s}(h(X_1),\dots,h(X_N)) - \E h \right| 
\leq c_2 \frac{ 2^{s/2}}{\sqrt{N}} \cdot \|h\|_{L_2}.
\end{equation}
Using \eqref{eq:in-proof-1} for every $h \in H_s$, combined with the fact that $|H_s| \leq 2^{2^{s+2}}$, it follows from the union bound that with probability at least $1-\exp(-2^{s+3})$, for every $f \in F$,
\begin{align}
\label{eq:for.corruption}
\left| \Psi_s(f) - \E \left(u(\pi_{s+1} f) - u(\pi_s f)\right) \right| \leq c_2 \frac{ 2^{s/2} }{\sqrt{N}} \cdot \|u(\pi_{s+1} f) - u(\pi_s f)\|_{L_2}. 
\end{align}
Similarly, with probability at least $1-\exp(-2^{s_0+3})$, for every $f \in F$,
\begin{align}
\label{eq:for.corruption.2} 
\left|\Psi^\prime_{s_0}(f)- \E u(\pi_{s_0} f) \right|
\leq c_2\frac{ 2^{s_0/2}}{\sqrt{N}} \cdot  \|u(\pi_{s_0}f) \|_{L_2}. 
\end{align}
Therefore, by the  union bound over $s_0\leq s<s_1$ and \eqref{eq:split.H(f)}, we have that with probability at least $1-\exp(-c_3 2^{s_0})$, for every $f \in F$,
\begin{align*}
& \left| \Psi(f) - \E u(f)\right| 
\\
&\leq  \left|\E u(f) - \E \pi_{s_1} u(f)\right| + \sum_{s=s_0}^{s_1-1}  \left| \Psi_s(f)- \E \bigl(u(\pi_{s+1} f) - u(\pi_s f)\bigr)  \right|
+ \left|\Psi^\prime_{s_0}(f)- \E u(\pi_{s_0} f) \right|
\\
&\leq  \left|\E u(f) - \E \pi_{s_1} u(f)\right| + \frac{c_2}{\sqrt{N}} \left( \sum_{s=s_0}^{s_1-1} 2^{s/2} \|u(\pi_{s+1} f) - u(\pi_s f)\|_{L_2} + 2^{s_0/2}\|u(\pi_{s_0}f)\|_{L_2}\right)
\\
&=  (1)+(2).
\end{align*}
All that remains is to derive uniform bounds on $(1)$ and $(2)$. 

\vspace{0.5em}

To estimate $(1)$, note that by  Assumption \ref{ass:other},
\begin{align*}
(1) &\leq  \E |u(f)-u(\pi_{s_1}f)| \\
&\leq  \E |f-\pi_{s_1} f| \cdot |v(|f|+|\pi_{s_1}f|) | 
\leq  \|f-\pi_{s_1}f\|_{L_2} \cdot \|v(|f|+|\pi_{s_1}f|)\|_{L_2}.
\end{align*}
Moreover, since  $v$ is increasing and nonnegative,
\[v(|f|+|\pi_{s_1}f|) 
\leq \max\{  v(2|f|), v(2|\pi_{s_1}f|)\}
\leq  v(2|f|) + v(2|\pi_{s_1}f|)\]
pointwise.
Thus, recalling that $R(F)=\sup_{f\in F} (\E v^4(2|f|))^{1/4}$,
\begin{align}
\label{eq:estim.v}
\|v(|f|+|\pi_{s_1}f|)\|_{L_2}
\leq \|v(2|f|)\|_{L_2} + \|v(2|\pi_{s_1}f|)\|_{L_2}
\leq 2 R(F).
\end{align}

Also, since $2^{s_1} =c_0 N$, it is evident that
\begin{align*}
\|f-\pi_{s_1}f\|_{L_2} 
&\leq  \frac{1}{2^{s_1/2}} \sum_{s \geq s_1} 2^{s_1/2}\|\Delta_s f\|_{L_2}  \\
&\leq  \frac{1}{\sqrt{c_0 N}} \sum_{s \geq s_1} 2^{s/2}\|\Delta_s f\|_{L_2}
\leq \frac{c_4(\kappa)}{\sqrt{N}} \E \sup_{f\in F} G_f,
\end{align*}
where the last inequality follows from \eqref{eq:admissible-1}.
Thus 
\[
(1) \leq 2c_4(\kappa) \frac{R(F)}{\sqrt{N}} \cdot \E \sup_{f\in F} G_f.
\]

Turning to $(2)$, by Assumption \ref{ass:other},
\begin{align*}
\|u(\pi_{s+1} f) - u(\pi_s f)\|_{L_2} \leq & \|\Delta_s f \cdot v(|\pi_sf|+|\pi_{s+1}f|) \|_{L_2} 
\\
\leq & \|\Delta_s f\|_{L_4} \cdot 2 R(F)  
\leq L \|\Delta_s f\|_{L_2} \cdot 2 R(F),
\end{align*}
where we used the same argument as in \eqref{eq:estim.v} to obtain $\|v(|\pi_sf|+|\pi_{s+1}f|) \|_{L_4}\leq 2 R(F)$.

Using that $u(0)=0$, a similar argument for the term involving $s_0$ shows that pointwise
$$
|u(\pi_{s_0}f)|=|u(\pi_{s_0}f)-u(0)| \leq |\pi_{s_0}f| \cdot v(|\pi_{s_0}f|),
$$
and therefore,  
$$
\|u(\pi_{s_0}f)\|_{L_2} \leq L \|\pi_{s_0} f\|_{L_2} \cdot R(F).
$$

Thus we have that
\begin{align*}
(2)  
&\leq c_5(L) \frac{R(F)}{\sqrt{N}} \sup_{f \in F} \left(\sum_{s=s_0}^{s_1-1} 2^{s/2} \|\Delta_s f\|_{L_2} +2^{s_0/2} \sup_{f \in F} \|f\|_{L_2}\right)
\\
&\leq  c_6(\kappa,L) \frac{R(F)}{\sqrt{N}} \left(\E\sup_{f \in F}G_f + 2^{s_0/2}d_F\right).
\end{align*}

We conclude that with probability at least $1-\exp(-c_3 2^{s_0})$, for every $f \in F$, 
\begin{equation} \label{eq:proof-main-final}
\left| \Psi(f) - \E u(f)\right| 
\leq c_7(\kappa,L) \frac{R(F)}{\sqrt{N}} \left(\E\sup_{f \in F}G_f + 2^{s_0/2}d_F\right). 
\end{equation}

Now, the first part of the claim follows by setting  $2^{s_0} \sim \log(1/\delta)$; and the second part is evident by setting  $2^{s_0} \sim \min\{D^\ast(F),N\}$.
\qed


\section{A geometric application}
\label{sec:application.Lp}
Let $\mu$ be an isotropic, log-concave measure on $\R^d$. Thus, $\mu$ is centred, its covariance operator is the identity, and it has a density function that is log-concave. Understanding the behaviour of log-concave measures is of the utmost importance in Asymptotic Geometric Analysis because the volume measure on a convex, centrally symmetric set is log-concave (see, e.g., \cite{artstein2015asymptotic}). It is well understood that most of the `mass' of any isotropic log-concave measure lives in the same thin shell \cite{artstein2021asymptotic,chen2021almost,lee2024eldan}, and as a result, distinguishing between any two such measures is a nontrivial task. That  motivated the following question, posed by V.~Milman: 

\begin{question} \label{qu:milman}
Given two isotropic log-concave measures $\mu$ and $\nu$ and a sample $X_1,…,X_N$ drawn from one of them, how large should $N$ be to decide whether $X\sim\mu$ or $X\sim\nu$?
\end{question}

One possible way of distinguishing between probability measures is through the $L_p$ structures they endow on $\R^d$: if $X$ is distributed according to $\mu$, then identifying each $z \in \R^d$ with the linear functional $\inr{\cdot,z}$, the unit ball of $(\R^d,L_p(X))$ is given by 
$$
{\mathcal K}_p = \left\{ z \in \R^d : \E |\inr{X,z}|^p \leq 1\right\}. 
$$
The sets ${\mathcal K}_p$ characterize the distribution of $X$; therefore, if one would like to distinguish between two log-concave measures, it suffices to exhibit that their $L_p$ unit balls are different---at least for some $p$. 

\begin{remark}
As it happens, all the sets ${\mathcal K}_p$ endowed by an isotropic, log-concave measure are equivalent to the Euclidean unit ball $B_2^d$: on the one hand, by isotropicity, ${\mathcal K}_2 = B_2^d$; on the other,
$$
{\mathcal K}_p \subset {\mathcal K}_2 \subset (cp)^p {\mathcal K}_p.
$$ 
Indeed, one inclusion follows because of the natural hierarchy of the $L_p$ norms, while the other is an outcome of Borell's inequality (see, e.g., \cite{artstein2015asymptotic})---that for every $z \in \R^d$ and every $p \geq 2$, $\|\inr{X,z}\|_{L_p} \leq cp \|\inr{X,z}\|_{L_2}$. Of course, this information is rather coarse and does not allow one to pin-point the sets ${\mathcal K}_p$ accurately.
\end{remark}
 
Taking Question \ref{qu:milman} as a starting point, it is natural to look for \emph{membership oracles} of the sets ${\mathcal K}_p$. To be more accurate, let $T \subset S^{d-1}$ be a centrally symmetric set, and consider the cone ${\mathcal C}_T=\{\lambda t : t \in T, \ \lambda \geq 0 \}$. Every $t \in T$ corresponds to a direction in $\R^d$ and the hope is to identify the right $\lambda>0$ for which $\|\inr{X,\lambda t}\|_{L_p} =1$. To that end, the goal is to design $\{0,1\}$-valued functionals $\mathcal{M}_{T,p}$ that upon receiving $X_1,...,X_N$ that are iid copies of $X$, decide accurately, for every $z \in {\mathcal C}_T$, whether $z \in {\mathcal K}_p$ or not.   

\begin{tcolorbox} 
\begin{question} \label{qu:sampling}
Let $\alpha,\delta\in(0,1)$, $p\geq 2$, and consider $X_1,...,X_N$ that are independent copies of an isotropic, log-concave random vector $X$.
How large should $N$ be to have a membership oracle $\mathcal{M}_{T,p}$ for which, with probability at least $1-\delta$ with respect to the $N$-product measure, the set $\widehat{\mathcal{K}}_{T,p}=\{z \in {\mathcal C}_T :  \mathcal{M}_{T,p}(z)=1\}$ satisfies 
$$
(1-\alpha)\widehat{\mathcal{K}}_{T,p} 
 \subset  ( {\mathcal K}_p \cap {\mathcal C}_T )
 \subset  (1+\alpha) \widehat{\mathcal{K}}_{T,p}?
$$ 
\end{question}
\end{tcolorbox} 

As we explain in what follows, an equivalent formulation of Question \ref{qu:sampling} is finding a functional $\Phi$, constructed using $X_1,...,X_N$, for which $\Phi(z)$ is `close' to $\|\inr{X,z}\|^p_{L_p}$ on ${\mathcal C}_T$. In other words, for every $z \in {\mathcal C}_T$,
\begin{equation} \label{eq:iso-L_p}
(1-\eps)\E |\inr{X,z}|^p \leq \Phi(z) \leq (1+\eps)\E |\inr{X,z}|^p.
\end{equation}
That boils down to Theorem \ref{thm:main} for $u(t)=|t|^p$ and $F=\{\inr{\cdot,t} : t \in T\}$.

An estimate on \eqref{eq:iso-L_p} was established in \cite{guedon2007lp} for $T=S^{d-1}$ and 
$$
\Phi(z)=\frac{1}{N}\sum_{i=1}^N |\inr{X_i,z}|^p.
$$
It was shown that 
$$
N=c(p) \frac{ d^{p/2}\log(d)  }{\eps^2} 
$$
suffices for \eqref{eq:iso-L_p} to hold for every $z \in \R^d$. Moreover, for that choice of $\Phi$ the dependence on $d$ is optimal up to a logarithmic factor. In \cite{mendelson2021approximating} a different, less natural functional was introduced---again, only in the case $T=S^{d-1}$, leading to a linear dependence on $d$ (which clearly is the best one can hope for):

\begin{theorem}
 \label{thm:L-p-approximation}
There is a functional $\Phi$ for which, for every isotropic log-concave random vector $X$, with probability at least $1-2\exp(-c_1\eps^2 N)$, for every $z\in\R^d$,
$$
(1-\eps)\E |\inr{X,z}|^p \leq \Phi(X_1,...,X_N,z) \leq (1+\eps)\E |\inr{X,z}|^p
$$
provided that $N \geq c_2(p)\frac{ d\log(2/\eps)}{\eps^2} $. 
\end{theorem}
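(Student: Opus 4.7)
The plan is to reduce Theorem \ref{thm:L-p-approximation} to a single invocation of Theorem \ref{thm:main} applied to the linear class $F_0 = \{f_z : z \in S^{d-1}\}$, where $f_z(x) = \langle x, z\rangle$, with $u(t) = |t|^p$, and then to extend the resulting sphere estimate to all of $\mathbb{R}^d$ by $p$-homogeneity via
\[
\Phi(X_1,\ldots,X_N, z) := \|z\|_2^p \cdot \Psi_\delta\!\left(X_1,\ldots,X_N, f_{z/\|z\|_2}\right)
\]
for $z \neq 0$ (and $\Phi = 0$ at the origin). Both $\Phi(z)$ and $\mathbb{E}|\langle X, z\rangle|^p$ are $p$-homogeneous in $z$, so it is enough to prove multiplicative accuracy on $S^{d-1}$.

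The assumptions of Theorem \ref{thm:main} are essentially free in this setting. Isotropicity gives $\|f_z - f_w\|_{L_2} = \|z-w\|_2$, so the Euclidean metric is an \emph{exact} $L_2$-oracle ($\kappa = 1$); central symmetry and mean zero are inherited from $S^{d-1}$ and the centredness of $X$. Log-concavity combined with isotropicity yields, via Borell's inequality, $\|f_z\|_{L_q} \leq c\,q\,\|z\|_2$ for every $q \geq 2$. This furnishes the $L_4$--$L_2$ norm equivalence with an absolute constant, and with $v(t) = p\,|t|^{p-1}$ it also gives
\[
R(F_0) = p\cdot 2^{p-1} \sup_{z \in S^{d-1}} \|f_z\|_{L_{4(p-1)}}^{p-1} \leq c(p).
\]
The remaining parameters of $F_0$ are $d_{F_0} = 1$ and $\mathbb{E}\sup_{z \in S^{d-1}} G_{f_z} = \mathbb{E}\|g\|_2 \leq \sqrt{d}$, with $g$ standard Gaussian on $\mathbb{R}^d$.

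Substituting into Theorem \ref{thm:main} with $\delta = 2\exp(-c_3 \varepsilon^2 N)$ yields, with probability at least $1 - 2\exp(-c_1 \varepsilon^2 N)$ and uniformly over $z \in S^{d-1}$,
\[
\bigl|\Psi_\delta(X_1,\ldots,X_N, f_z) - \mathbb{E}|\langle X, z\rangle|^p\bigr|
\leq c(p)\left(\frac{\sqrt{d}}{\sqrt{N}} + \sqrt{\frac{\log(1/\delta)}{N}}\right),
\]
which is at most $\varepsilon$ once $N \geq c_2(p)\,d\log(2/\varepsilon)/\varepsilon^2$ (and in fact $N \geq c(p)d/\varepsilon^2$ already suffices with this route). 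To convert additive into multiplicative error, I would use Jensen: for $z \in S^{d-1}$, $\mathbb{E}|\langle X, z\rangle|^p \geq (\mathbb{E}|\langle X, z\rangle|^2)^{p/2} = 1$, so an additive bound by $\varepsilon$ is \emph{a fortiori} a multiplicative bound by $\varepsilon$. Homogeneity of $\Phi$ then propagates the multiplicative control to all $z \in \mathbb{R}^d$.

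The only delicate point, and the one place log-concavity enters beyond providing the exact oracle, is bounding $R(F_0)$ by a constant depending only on $p$. Assumption \ref{ass:other} requires control of $\mathbb{E}v^4(2|f_z|) = p^4 \cdot 2^{4(p-1)} \mathbb{E}|\langle X, z\rangle|^{4(p-1)}$, which for a generic distribution could be infinite or enormous. Borell's inequality gives $\|f_z\|_{L_{4(p-1)}} \lesssim_p \|z\|_2$ for free, making $R(F_0)$ a $p$-dependent constant, which is precisely what Theorem \ref{thm:main} needs. Without such a moment bound the reduction would fail---which is why the problem is out of reach for naive empirical-mean based schemes when $p > 2$, as $u(t) = |t|^p$ grows too quickly relative to the tails.
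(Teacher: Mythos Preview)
Your argument is correct. Note, however, that the paper does not supply its own proof of Theorem~\ref{thm:L-p-approximation}: it is quoted from \cite{mendelson2021approximating}, whose original construction is problem-specific and quite different from the chaining-based estimator of Theorem~\ref{thm:main}. What you have written is, in effect, the specialization to $T=S^{d-1}$ of the paper's proof of the \emph{next} theorem (the one for arbitrary centrally symmetric $T\subset S^{d-1}$): the same choice $F=\{\inr{\cdot,z}:z\in T\}$, $u(t)=|t|^p$, the exact Euclidean oracle from isotropicity, the bound $R(F)\leq c(p)$ via Borell's inequality, and extension by $p$-homogeneity. The only cosmetic difference is that the paper passes from the additive error $\|z\|_2^p\,\eps$ to a multiplicative one using the two-sided equivalence $\mathcal K_2\sim_p\mathcal K_p$, whereas you use the one-sided Jensen bound $\E|\inr{X,z}|^p\geq 1$ on the sphere; for the multiplicative conclusion only the lower bound is needed, so your route is slightly cleaner. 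Your remark that this already yields $N\gtrsim_p d/\eps^2$ without the $\log(2/\eps)$ factor is also correct and matches the improvement the paper attributes to \cite{bartl2022structure}.
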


The condition on $N$ in Theorem \ref{thm:L-p-approximation} was subsequently improved to $N \geq c_2(p)\frac{ d}{\eps^2} $ in \cite{bartl2022structure}, but
the proof of Theorem \ref{thm:L-p-approximation} and the improvement in \cite{bartl2022structure} rely heavily on the fact that $T=S^{d-1}$ and do not extend beyond that case.
As such, those methods are not suitable for finding an answer to Question \ref{qu:sampling}, as that calls for dealing with an arbitrary subset of  the sphere.

In contrast, thanks to Theorem \ref{thm:main}, we have the following optimal estimate:

\begin{theorem}
For every $p\geq 2$ there are constants $c_1,c_2,c_3$ that depend only on $p$ for which the following holds. Let $T\subset S^{d-1}$ be a centrally symmetric set, consider the cone ${\mathcal C}_T=\{\lambda t : \ t \in T, \lambda >0\}$, set $\eps \leq c_1$ and let
\[ N\geq c_2 \left( \frac{ \E \sup_{t\in T} \inr{G,t} }{\eps}\right)^2.\]
Then there exists a random set $\widehat{\mathcal{K}}_{T,p}$ that is determined by $X_1,...,X_N$, for which, with probability at least $1-\exp(-c_3N \varepsilon^2 )$, 
\[ (1-\varepsilon) \widehat{\mathcal{K}}_{T,p}
\subset (\mathcal{K}_p \cap {\mathcal C}_T)
\subset (1+\varepsilon)  \widehat{\mathcal{K}}_{T,p} .\]
\end{theorem}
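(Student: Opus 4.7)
The strategy is to apply Theorem \ref{thm:main} with $F=\{\inr{\cdot,t}:t\in T\}$ and $u(s)=|s|^p$ to obtain a uniform estimator for the map $t\mapsto \E |\inr{X,t}|^p$, and then to promote an additive approximation to a multiplicative one---which, after taking $p$-th roots, is exactly a radial approximation of $\mathcal{K}_p\cap{\mathcal C}_T$.

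First I verify the hypotheses of Theorem \ref{thm:main}. Isotropy gives $\|\inr{\cdot,t}-\inr{\cdot,s}\|_{L_2}=\|t-s\|_2$, so Assumption \ref{ass:iso-distance-orace} is satisfied with ${\mathbbm \rho}$ the Euclidean distance on $T$ and $\kappa=1$; in particular $d_F=1$. The class $F$ is centrally symmetric and mean-zero, and Borell's inequality gives $\|\inr{X,z}\|_{L_q}\leq cq\|\inr{X,z}\|_{L_2}$ for every $z\in\R^d$ and $q\geq 2$, so the $L_4$-$L_2$ norm equivalence in Assumption \ref{ass:other} holds with an absolute constant. Taking $v(s)=p|s|^{p-1}$, a further application of Borell's inequality yields $R(F)\leq c(p)$. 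With these inputs, Theorem \ref{thm:main} produces, for $\delta=\exp(-c_3 N\varepsilon^2)$, a procedure $\Psi_\delta$ that with probability at least $1-\delta$ satisfies
\[
\sup_{t\in T}\bigl|\Psi_\delta(X_1,\ldots,X_N,t)-\E|\inr{X,t}|^p\bigr|
\leq c(p)\left(\frac{\E\sup_{t\in T}\inr{G,t}}{\sqrt N}+\varepsilon\right)
\leq c_4(p)\,\varepsilon,
\]
where the last inequality uses the assumed lower bound on $N$.

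Since $X$ is isotropic, $\E|\inr{X,t}|^p\geq (\E|\inr{X,t}|^2)^{p/2}=1$ for every $t\in T$, so by taking $c_1$ small enough the additive bound above converts to the multiplicative estimate
\[
(1-c_5(p)\varepsilon)\,\E|\inr{X,t}|^p\leq \Psi_\delta(t)\leq (1+c_5(p)\varepsilon)\,\E|\inr{X,t}|^p \qquad \text{for every } t\in T.
\]
I then extend $\Psi_\delta$ homogeneously of degree $p$ to the whole cone by setting $\Phi(z)=\|z\|_2^p\,\Psi_\delta(z/\|z\|_2)$ and define $\widehat{\mathcal K}_{T,p}=\{z\in{\mathcal C}_T:\Phi(z)\leq 1\}$. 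Since $z\mapsto \E|\inr{X,z}|^p$ is also homogeneous of degree $p$, the intersections of $\widehat{\mathcal K}_{T,p}$ and of $\mathcal K_p\cap{\mathcal C}_T$ with the ray $\{\lambda t:\lambda\geq 0\}$ through $t\in T$ are the segments from $0$ to $\Psi_\delta(t)^{-1/p}\,t$ and $(\E|\inr{X,t}|^p)^{-1/p}\,t$, respectively. Taking $p$-th roots of the multiplicative bound and absorbing $c_5(p)$ into the constants $c_1,c_2,c_3$ yields the claimed radial inclusions. The only real obstacle is the bookkeeping of $p$-dependent constants through Theorem \ref{thm:main} and the additive-to-multiplicative conversion; the substantive work is already encapsulated in Theorem \ref{thm:main}.
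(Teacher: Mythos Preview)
Your proposal is correct and follows essentially the same route as the paper: apply Theorem \ref{thm:main} to $F=\{\inr{\cdot,t}:t\in T\}$ with $u(s)=|s|^p$, extend the estimator $p$-homogeneously to the cone, and define $\widehat{\mathcal K}_{T,p}$ as the sublevel set $\{\Phi\leq 1\}$. The one cosmetic difference is the order of the last two steps: the paper first extends to the cone and then converts the additive error $\|z\|_2^p\,\eps$ into a multiplicative one using the two-sided equivalence $\|z\|_2^p\sim_p \E|\inr{X,z}|^p$ (from Borell's inequality), whereas you convert on the sphere using only the one-sided Jensen bound $\E|\inr{X,t}|^p\geq 1$ and then extend homogeneously, which preserves the multiplicative form automatically. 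Both orderings are valid; yours is marginally more economical in that the conversion step needs only the lower bound.
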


\begin{proof}
	Set $F=\left\{ \inr{\cdot,w} : w \in T\right\}$ and put $u(t)=|t|^p$. 
	In this case, using the isotropicity and log-concavity of $X$, 
\begin{align*}
R(F) \leq   2 \sup_{z \in  B_2^d} \left(\E |\inr{X,z}|^{4(p-1)}\right)^{1/4}
\leq  c_1(p). 
\end{align*}

Therefore, by Theorem \ref{thm:main}, with probability at least $1-\delta$, for every $w \in T$,
\begin{equation} \label{eq:L-p-est}
 \left| \Psi(X_1,...,X_N,w) - \E|\inr{X,w}|^p \right| 
\leq  c_3(p) \left( \frac{ \E \sup_{t\in T} \inr{G,t} }{ \sqrt N} + \sqrt\frac{\log(1/\delta)}{ N} \right)
\leq \varepsilon,
\end{equation}
where the last inequality holds if 
$$
N \geq \left( \frac{2c_3}{\varepsilon} \E \sup_{t\in T} \inr{G,t} \right)^2
$$ 
and for the choice $\delta=\exp(-c_4(p) N\varepsilon^2)$.

Now consider $z \in {\mathcal C}_T\setminus\{0\}$, let $w=z/\|z\|_2$ and put
$$
\Psi_1(X_1,...,X_N,z)=\|z\|_2^p \cdot \Psi(X_1,...,X_N,w).
$$
It is evident that for every $z \in {\mathcal C}_T\setminus\{0\}$,  $\frac{z}{\|z\|_2}\in T$ and 
\begin{equation} \label{eq:L-p-in-proof-1}
\|z\|_2^p \cdot \left| \Psi\left( X_1,...,X_N, \frac{z}{\|z\|_2}\right) - \E \left|\inr{X,\frac{z}{\|z\|_2}}\right|^p \right| \leq \|z\|_2^p \eps. 
\end{equation}
Recall that ${\mathcal K}_2 \sim_p {\mathcal K}_p$, and that ${\mathcal K}_2=B_2^d$. Therefore, $\|z\|_2^p \sim_p \E |\inr{X,z}|^p$, and \eqref{eq:L-p-in-proof-1} becomes 
\begin{equation} \label{eq:L-p-in-proof-2}
 \left| \Psi_1(X_1,...,X_N,z) - \E |\inr{X,z}|^p \right| \leq c_5(p)\eps \E |\inr{X,z}|^p.  
\end{equation}

Finally, set 
\[ 
\widehat{\mathcal{K}}_{T,p }
=\{ z\in {\mathcal C}_T :  \Psi_1(X_1,\dots,X_N,z) \leq 1 \},
\]
fix a realization for which \eqref{eq:L-p-est} holds, and let $z\in \mathcal{C}_T$.

It is straightforward to verify that if $z \in \widehat{\mathcal{K}}_{T,p }$ then 
$$
\E|\inr{X,z}|^p \leq \frac{1}{1-c_5\eps} \Psi_1(X_1,...,X_N,z)
\leq \frac{1}{1-c_5\eps},
$$
and if $z \not \in \widehat{\mathcal{K}}_{T,p }$ then 
$$
\E|\inr{X,z}|^p > \frac{1}{1+c_5\eps} \Psi_1(X_1,...,X_N,z)
> \frac{1}{1+c_5\eps} ,
$$
as claimed. 
\end{proof}

\section{Adversarial corruption}
\label{sec:corruption}

A natural question in high dimensional statistics is  designing uniform mean estimators that are accurate even if a part of the sample is corrupted, see e.g.\ \cite{abdalla2022covariance,catoni2012challenging,diakonikolas2019robust,lugosi2019risk}.
As it happens, the chaining  based argument that is used in the proof of Theorem \ref{thm:main} fits that setup nicely. 

Let $X_1,\dots,X_N$ be independent copies of $X$ and set  $Y_1,\dots,Y_N$ to be a corrupted sample with corruption level $\eta\in[0,1]$: an adversary can choose to change at most $\eta N$ of the points $X_1,...,X_N$ in any way they like. Of course, there is no information on which of the sample points have actually been corrupted, nor on the way in which they have been corrupted. 

\begin{theorem} \label{thm:main.corrpution}
Under Assumptions \ref{ass:iso-distance-orace} and \ref{ass:other}, there is an absolute constant $c_1$ and a constant $c_2=c_2(L,\kappa)$ for which the following holds.
If $\delta \geq \exp(-c_1N) $ and $\eta\in[0,1]$, there is a procedure $\Psi_{\delta,\eta}$ for which, with probability at least $1-\delta$, 
\begin{align} \label{eq:main-2.cor}
\begin{split}
& \sup_{f\in F}\left| \Psi_{\delta,\eta}(Y_1,...,Y_N,f) - \E u(f) \right|  \\
&\qquad \leq  c_2 R(F)\left(\frac{\E \sup_{f \in F} G_f}{\sqrt{N}} + d_F \sqrt{\frac{\log(1/\delta)}{N}} + d_F\sqrt{\eta} \right).
\end{split}
\end{align}
\end{theorem}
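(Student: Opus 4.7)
The strategy is to rerun the chaining argument behind Theorem \ref{thm:main}, replacing each univariate mean estimator $\psi_{\delta_s}$ with a one-dimensional \emph{robust} mean estimator that tolerates adversarial corruption. The single imported fact I would use is well-known in the mean-estimation literature: there exists a procedure $\psi_{\delta,\eta}:\R^N\to\R$ such that whenever $Y_1,\dots,Y_N$ is an $\eta$-contamination of an iid sample of a random variable $Z$ with finite variance, and $\delta\geq\exp(-cN)$, with probability at least $1-\delta$,
\begin{equation*}
\bigl|\psi_{\delta,\eta}(Y_1,\dots,Y_N) - \E Z \bigr| \leq c'\, \sigma_Z\Bigl(\sqrt{\log(1/\delta)/N}+\sqrt{\eta}\Bigr).
\end{equation*}
Such procedures (for instance a trimmed mean or a Huber-type $M$-estimator) are standard, and the $\sqrt\eta$ rate is optimal.

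With $\psi_{\delta_s,\eta}$ in hand, I would define $\Psi_{\delta,\eta}$ by exactly the same formula that defines $\Psi$ in the proof of Theorem \ref{thm:main}, but evaluated at the corrupted data $Y_1,\dots,Y_N$ and using $\psi_{\delta_s,\eta}$ in place of $\psi_{\delta_s}$. Since $|H_s|\leq 2^{2^{s+2}}$, the union bound that produced \eqref{eq:for.corruption} and \eqref{eq:for.corruption.2} carries through verbatim, with the only change being that each of those right-hand sides gains an additive $c\sqrt\eta\,\|\cdot\|_{L_2}$ term coming from the robust guarantee. Aggregating via the telescoping identity \eqref{eq:split.H(f)} and replaying the bounds on $(1)$ and $(2)$ from the proof of Theorem \ref{thm:main} shows that, with probability at least $1-\exp(-c_3 2^{s_0})$ and for every $f\in F$,
\begin{align*}
\bigl|\Psi_{\delta,\eta}(Y_1,\dots,Y_N,f)-\E u(f)\bigr| \leq (1)+(2)+(3),
\end{align*}
where $(1)$ and $(2)$ are the terms appearing in the uncorrupted proof and
\begin{align*}
(3) = c\sqrt\eta\biggl(\sum_{s=s_0}^{s_1-1}\|u(\pi_{s+1}f)-u(\pi_s f)\|_{L_2} + \|u(\pi_{s_0}f)\|_{L_2}\biggr).
\end{align*}

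The substantive new step is controlling $(3)$. Using Assumption \ref{ass:other} as in the proof of Theorem \ref{thm:main} bounds the summands by $2LR(F)\|\Delta_s f\|_{L_2}$ and $LR(F)\|\pi_{s_0}f\|_{L_2}\leq LR(F)d_F$ respectively, so $(3)\lesssim_L R(F)\sqrt\eta\bigl(d_F + \sum_{s\geq s_0}\|\Delta_s f\|_{L_2}\bigr)$. For the remaining sum I would exploit that, since $2^{-s/2}\leq 2^{-s_0/2}$ for $s\geq s_0$,
\begin{equation*}
\sum_{s\geq s_0}\|\Delta_s f\|_{L_2} \leq 2^{-s_0/2}\sum_{s\geq s_0} 2^{s/2}\|\Delta_s f\|_{L_2} \leq c(\kappa)\, 2^{-s_0/2}\, \E\sup_{f\in F} G_f,
\end{equation*}
via the admissibility bound \eqref{eq:admissible-1}, yielding $(3)\leq c(L,\kappa)\,R(F)\sqrt\eta\bigl(d_F + 2^{-s_0/2}\E\sup_{f\in F} G_f\bigr)$.

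The last step is to calibrate $s_0$. I would choose $2^{s_0}\sim\max\{\log(1/\delta),D^*(F)\}$, which is admissible provided $\log(1/\delta)\leq cN$ (the standing hypothesis $\delta\geq\exp(-c_1N)$) and $D^*(F)\leq cN$ (otherwise the conclusion is trivial). This single choice simultaneously gives (i) confidence $\exp(-c_3 2^{s_0})\leq\delta$; (ii) $(1)+(2)\leq c(L,\kappa)R(F)\bigl(\E\sup_{f\in F} G_f/\sqrt N + d_F\sqrt{\log(1/\delta)/N}\bigr)$ exactly as in Theorem \ref{thm:main}; and (iii) $2^{-s_0/2}\E\sup_{f\in F} G_f\leq d_F$, so that $(3)\leq c(L,\kappa)R(F)d_F\sqrt\eta$. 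Summing the three contributions yields \eqref{eq:main-2.cor}. The only genuine obstacle is importing the one-dimensional corruption-robust mean estimator with the sharp $\sqrt\eta$ dependence; once that black box is in place, the generic chaining argument of Theorem \ref{thm:main} composes with essentially no change.
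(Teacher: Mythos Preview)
Your argument is correct, but the paper takes a slightly different (and somewhat slicker) route. The paper also imports the robust univariate estimator (stated there as Theorem \ref{thm:mean-corrupt}), but instead of tracking the $\sqrt\eta$ contribution as a separate term $(3)$, it calibrates $2^{s_0}\sim\max\{\log(1/\delta),\eta N\}$. With this choice, for every $s\geq s_0$ one has $\sqrt\eta\leq\sqrt{2^{s_0}/N}\leq\sqrt{2^s/N}$, so the robust error $\sqrt{2^s/N}+\sqrt\eta$ is $\sim\sqrt{2^s/N}$ at each level, and the chaining argument of Theorem \ref{thm:main} applies verbatim; the $\sqrt\eta$ term then reappears automatically through $2^{s_0/2}d_F/\sqrt N$. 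Your approach instead fixes $2^{s_0}\sim\max\{\log(1/\delta),D^*(F)\}$, keeps the corruption contribution separate, and controls $\sum_{s\geq s_0}\|\Delta_s f\|_{L_2}$ by the $\gamma_2$ sum via the factor $2^{-s_0/2}$---which works precisely because $2^{s_0}\geq D^*(F)$. Both calibrations are valid; the paper's has the advantage that the proof literally reduces to the uncorrupted case, while yours makes the dependence on $\eta$ more transparent. One small omission on your side: the univariate robust estimator is only available for $\eta$ below an absolute threshold (say $\eta<1/2$), so the range $\eta\in[c_0,1]$ must be dispatched separately---the paper does this explicitly by setting $\Psi_{\delta,\eta}\equiv0$ and observing that $|\E u(f)|\leq d_F R(F)\lesssim d_F R(F)\sqrt\eta$.
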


The dependence of \eqref{eq:main-2.cor}  on $\eta$ is known to be optimal unless one imposes stronger assumptions on the tail behaviour of functions $u(f)$.

The starting point in the proof of Theorem \ref{thm:main.corrpution} is finding an appropriate `subgaussian increment condition' that holds  in the corrupted setup. 

\begin{theorem} \label{thm:mean-corrupt}
There are absolute constants $c_1,c_2$ and $\eta_0<\frac{1}{2}$ and for every $\exp(-c_1N) \leq \delta \leq \frac{1}{2}$  and $\eta\in[0,\eta_0)$ there is a mean estimation procedure $\psi_{\delta,\eta}$ for which the following holds. 
If $X$ is a random variable and $Y_1,...,Y_N$ is a sample with at most $\eta N$ corrupted points, then with probability at least $1-\delta$,
\begin{equation} \label{eq:mean-corrupt}
\left| \psi_{\delta,\eta}(Y_1,...,Y_N) - \E X \right| 
\leq c_2\sigma_X \left(\sqrt{\frac{\log(1/\delta)}{N}} + \sqrt{\eta}\right).
\end{equation}
\end{theorem}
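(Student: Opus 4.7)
The natural approach is the median-of-means estimator, with the block count tuned to absorb both the confidence level and the corruption rate simultaneously. Concretely, I would set $k = \lceil C(\log(1/\delta) + \eta N)\rceil$ for a sufficiently large absolute constant $C$, partition the sample into $k$ disjoint blocks $B_1,\dots,B_k$ of size $m = \lfloor N/k\rfloor$, form the block averages $\bar Y_j = \frac{1}{m}\sum_{i\in B_j} Y_i$, and define
\[
\psi_{\delta,\eta}(Y_1,\dots,Y_N) = \operatorname{median}(\bar Y_1,\dots,\bar Y_k).
\]
The restriction $\delta \geq \exp(-c_1 N)$ together with $\eta \leq \tfrac12$ guarantees that $k \leq N/2$ so that each block has $m \gtrsim 1$ samples.

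First I would analyze the estimator \emph{without} corruption: writing $\bar X_j = \frac{1}{m}\sum_{i\in B_j} X_i$ for the clean block averages, Chebyshev's inequality gives
\[
\PP\!\left( |\bar X_j - \E X| > 4\sigma_X/\sqrt{m} \right) \leq \frac{1}{16}.
\]
Applying Hoeffding's inequality to the independent Bernoulli indicators $\IND_{\{|\bar X_j - \E X| > 4\sigma_X/\sqrt m\}}$ then shows that, with probability at least $1-\exp(-c k)$, the number of blocks satisfying $|\bar X_j - \E X| \leq 4\sigma_X/\sqrt m$ (call these \emph{good}) is at least $7k/8$. Since $k \geq C\log(1/\delta)$, this event has probability at least $1-\delta$ once $C$ is large enough.

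Next I would bring in the corruption. Each of the $\eta N$ corrupted sample points lies in exactly one block, so the number of blocks containing any corrupted point is at most $\eta N$. If $C$ is chosen so that $\eta N \leq k/8$, then on the event above at least $7k/8 - k/8 = 3k/4$ of the blocks are simultaneously good and uncorrupted; for these indices $\bar Y_j = \bar X_j \in [\E X - 4\sigma_X/\sqrt m, \E X + 4\sigma_X/\sqrt m]$. Because strictly more than half of all block averages lie in this interval, the median must lie there as well. Substituting $m = N/k$ and $k \leq C(\log(1/\delta) + \eta N) + 1$ yields
\[
|\psi_{\delta,\eta}(Y_1,\dots,Y_N) - \E X| \leq \frac{4\sigma_X}{\sqrt m} = 4\sigma_X\sqrt{k/N} \lesssim \sigma_X\!\left(\sqrt{\tfrac{\log(1/\delta)}{N}} + \sqrt{\eta}\right),
\]
which is the advertised bound.

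The main obstacle is choosing $k$ to balance three competing demands in a single estimator: the block count must be large enough for Hoeffding to deliver the full $1-\delta$ confidence, large enough that the corrupted blocks form a strict minority, yet small enough that each block still contains enough clean samples for Chebyshev concentration. The identity $k \asymp \log(1/\delta) + \eta N$ is precisely the scale that matches all three, and once this is in place the rest is bookkeeping. A secondary subtlety is the adversarial placement of corruptions: the argument relies crucially on the adversary being unable to corrupt more than one block per corrupted sample, which is automatic because the blocks are predetermined and disjoint rather than data-dependent.
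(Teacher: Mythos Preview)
The paper does not actually prove this statement; after stating the theorem it only remarks that ``the Median of Means estimator satisfies \eqref{eq:mean-corrupt}'' and refers to \cite{lugosi2021robust} for an alternative construction. Your proposal is precisely the median-of-means argument the paper alludes to, so the approaches coincide and your write-up is already more detailed than the source.

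One quantitative point deserves attention. To enforce $\eta N \leq k/8$ you need $C\geq 8$, and then the assertion that ``$\delta\geq \exp(-c_1 N)$ together with $\eta\leq \tfrac12$ guarantees $k\leq N/2$'' is false: for $\eta$ close to $\tfrac12$ one has $k\geq C\eta N \geq 8\eta N \approx 4N$, more blocks than sample points. Your argument as written therefore only covers $\eta$ below some small absolute constant (roughly $\eta\lesssim 1/16$). This is harmless for the paper's purposes---the proof of Theorem~\ref{thm:main.corrpution} explicitly disposes of the range $\eta\geq c_0$ via the trivial estimator $\Psi\equiv 0$ and invokes the present result only for $\eta<c_0$---but if you want the full interval $[0,\tfrac12)$ with a single absolute constant $c_2$, plain median-of-means runs into trouble: pushing the Chebyshev threshold to recover a larger good fraction forces the error prefactor to grow like $(1-2\eta)^{-1/2}$. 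For that regime one should appeal to the trimmed-mean construction of \cite{lugosi2021robust} that the paper also cites.
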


One can show that the Median of Means estimator satisfies \eqref{eq:mean-corrupt}, see Appendix \ref{sec:MOM.1d}.
Another construction can be found in  \cite{lugosi2021robust}.

Thanks to Theorem \ref{thm:mean-corrupt}, the proof of Theorem \ref{thm:main.corrpution} follows the very same path  used in the proof of Theorem \ref{thm:main}, and we will only sketch the argument.

\begin{proof}[Proof of Theorem \ref{thm:main.corrpution}]
We begin with the trivial case $\eta\in[c_0,1]$, where $c_0$ is a well chosen  absolute constant.
Set $\Psi_{\delta,\eta}=0$ and by using Assumption \ref{ass:other}, for every $f\in F$,
\[ |\Psi_{\delta,\eta}(f) - \E u(f)|
\leq \E | u(0) - u(f) | 
\leq \E |f| v(|f|)
\leq d_F R(F)
\leq \frac{1}{\sqrt{c_0}}  d_F R(F)  \sqrt{\eta}, \]
as required.

In case that $\eta<c_0$, let  $\delta_s=\exp(-2^{s+4})$, set $2^{s_0}\sim\max\{ \log(1/\delta), \eta N\}$, put $2^{s_1}\sim N$, and consider
\begin{align*}
\Psi_{\delta,\eta}(f) &= \sum_{s=s_0}^{s_1-1} \Psi_{s,\eta} (f) + \Psi^\prime_{s_0, \eta}(f)\\
&= \sum_{s=s_0}^{s_1-1} \psi_{\delta_s,\eta} \left( \left(u(\pi_{s+1} f(Y_i)) - u(\pi_s f(Y_i))\right)_{i=1}^N\right)+ \psi_{\delta_{s_0},\eta}\left( \left( u(\pi_{s_0}f(Y_i)) \right)_{i=1}^N \right).
\end{align*}
For every $s\geq s_0$, the error in \eqref{eq:mean-corrupt} scales just as in the uncorrupted case, because  
\[ 
\sqrt{\frac{\log(1/\delta_s)}{N}}  + \sqrt{\eta}
\sim \sqrt{\frac{2^s}{N}}  + \sqrt{\eta}
\sim \sqrt{\frac{2^s}{N}}. 
\]
Therefore, following the proof of Theorem \ref{thm:main} line-by-line, we have that with probability at least $1-\exp(-c_1 2^{s_0})$,
 \[  
|\Psi_{\delta,\eta}(f) - \E u(f)|
\leq c_2(L,\kappa) \frac{R(F)}{\sqrt{N}} \left(\E\sup_{f \in F}G_f + 2^{s_0/2}d_F\right),
\]
and the proof is completed by the choice of $s_0$.
\end{proof}

\subsection{Corrupted covariance estimation}
Even in the simplest scenario, where the class $F$ consists of linear functionals on $\R^d$, dealing with corrupted data is interesting for every reasonable choice of $u$ (e.g., $u(t)=|t|^p$), and an arbitrary $T \subset \R^d$. But to put Theorem \ref{thm:main.corrpution} in context, it is instructive to explore the case $u(t)=t^2$ and $T=S^{d-1}$---corresponding to \emph{corrupted covariance estimation} in $\R^d$. 

Let $X$ be an unknown, centred random vector in $\R^d$. 
 Assume that $X$ is square integrable and denote its covariance matrix by  $\Sigma_X$.
One receives as data a random sample consisting of $N$ independent copies of $X$ where at most $\eta N$ of them have been corrupted by an adversary. The goal is to find a symmetric, positive semi-definite matrix $\widehat{\Sigma}_{\delta,\eta}$ that approximates the covariance matrix of $X$: with probability at least $1-\delta$, 
\begin{align}
\label{eq:cov.estimaton.1}
\left\| \widehat{\Sigma}_{\delta,\eta} - \Sigma_X \right\|_{\rm op}
=\sup_{z,z^\prime \in S^{d-1}} \left|\inr{\widehat{\Sigma}_{\delta,\eta} \, z,z^\prime} - \inr{\Sigma_X z,z^\prime} \right| \leq \eps.
\end{align}

Since $\widehat{\Sigma}_{\delta,\eta}$ and $\Sigma_X$ are symmetric and positive semi-definite, one may restrict the supremum in \eqref{eq:cov.estimaton.1} to $z^\prime=z$, leading to the choice of $T=S^{d-1}$ and $u(t)=t^2$. 

\begin{remark}
To a certain extent, $T=S^{d-1}$ is a relatively easy choice in the context of Theorem \ref{thm:main.corrpution}. The sphere is both well-structured and rich, and the error caused by that richness can mask the effects of the random sample---making a sharp estimate simpler to derive.
\end{remark}
Thanks to the given functional $\mathbbm{\rho}$, Assumption \ref{ass:iso-distance-orace} implies that one has some information on the covariance matrix $\Sigma_X$; that is not the case in standard covariance estimation problems. Still, even if $\mathbbm{\rho}$ is given using a positive semi-definite matrix $A$, all that Assumption \ref{ass:iso-distance-orace} yields is that for every $z \in \R^d$, 
\begin{equation} \label{eq:iso-corrupt}
\kappa^{-2} \inr{\Sigma_X z,z} \leq \inr{A z,z} \leq \kappa^2 \inr{\Sigma_X z,z}.
\end{equation}
In contrast, the goal in covariance estimation is to derive significantly sharper bounds on $\Sigma_X$, and off-hand it is not clear that the isomorphic estimate \eqref{eq:iso-corrupt} is any real help towards achieving that goal. Thanks to Theorem \ref{thm:main.corrpution}, we will show that the crude isomorphic estimate leads to an optimal one.  
 
\vskip0.3cm
Let us show how  a suitable uniform mean estimator (that need not be related to a positive semi-definite matrix) leads to an optimal choice of $ \widehat{\Sigma}_{\delta,\eta}$. 

First, suppose that there is a procedure $\Psi_{\delta,\eta}$ which satisfies that with probability $1-\delta$, 
\begin{equation} \label{eq:cov-est-0}
\sup_{z \in S^{d-1}} \left| \Psi_{\delta,\eta}(Y_1,...,Y_N,z) - \E\inr{X,z}^2 \right| \leq \eps,
\end{equation}
and set $\widehat{\Sigma}_{\delta,\eta}$ to be any positive semi-definite matrix for which
\begin{equation} \label{eq:cov-est-1}
\sup_{z \in S^{d-1}} \left|\inr{\widehat{\Sigma}_{\delta,\eta}\, z,z} - \Psi_{\delta,\eta}(Y_1,...,Y_N,z)  \right| \leq \eps.
\end{equation}
Thanks to \eqref{eq:cov-est-0} such a matrix exists: 
Clearly
\begin{align}
\label{eq:cov.rep}
\E \inr{X,z}^2 = \inr{ \Sigma_Xz,z} 
\end{align}
and hence \eqref{eq:cov-est-1} holds for $\widehat{\Sigma}_{\delta,\eta}=\Sigma_X$. 
At the same time, if $\widehat{\Sigma}_{\delta,\eta}$ satisfies \eqref{eq:cov-est-1}, it follows from \eqref{eq:cov-est-0} and \eqref{eq:cov.rep} that
\begin{align}
\label{eq:cov.triangle}
\left\| \widehat{\Sigma}_{\delta,\eta} - \Sigma_X \right\|_{\rm op}
= \sup_{z \in S^{d-1}} \left|\inr{\widehat{\Sigma}_{\delta,\eta}\, z,z} - \inr{\Sigma_X z,z} \right|
  \leq 2 \varepsilon,
 \end{align}
as required. 

\vspace{0.5em}

The optimal answer on covariance estimation---even if a fraction of the  sample points have been corrupted---, was given recently in \cite{abdalla2022covariance} (see also \cite{oliveira2022improved}).
To formulate it, let $\lambda_1$ be the largest singular value of $\Sigma_X$ and denote its trace by ${\rm Tr}(\Sigma_X)$.

\begin{theorem}
\label{thm:AZ}
Assume that $X$ has mean zero and satisfies $L_4-L_2$ norm equivalence with constant $L$, i.e.\ $\|\inr{X,z}\|_{L_4}\leq L \|\inr{X,z}\|_{L_2}$ for every $z\in \R^d$.
Let $\delta\in(0,1)$ and $\eta\in[0,1]$. 
If  $N \geq c_1 ({\rm Tr}(\Sigma_X) + \log(1/\delta))$, there exists procedure $\widehat{\Sigma}_{\delta,\eta}$ for which, with probability at least $1-\delta$,
\begin{align}
\label{eq:AZ}
 \left\| \widehat{\Sigma}_{\delta,\eta} - \Sigma_X \right\|_{\rm op}
\leq c_2(L)  \lambda_1 \left( \sqrt \frac{{\rm Tr}(\Sigma_X)}{ \lambda_1 N} + \sqrt{\frac{\log(1/\delta)}{N}} + \sqrt{\eta} \right).
\end{align}
\end{theorem}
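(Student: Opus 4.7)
The plan is to derive Theorem~\ref{thm:AZ} as a consequence of Theorem~\ref{thm:main.corrpution} together with the conversion from uniform mean estimator to PSD matrix described in the paragraphs preceding \eqref{eq:cov-est-1}. Throughout, I would work with the indexing class $F = \{\inr{\cdot,z} : z \in S^{d-1}\}$ and $u(t)=t^2$.

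First I would verify Assumption~\ref{ass:other}. Central symmetry and mean zero are immediate, the $L_4$--$L_2$ norm equivalence is the hypothesis on $X$, and since $|s^2 - t^2| \leq (|s|+|t|)|s-t|$, one may take $v(r)=r$. A direct computation then yields
\[
R(F) = \sup_{z \in S^{d-1}} \bigl(\E (2|\inr{X,z}|)^4\bigr)^{1/4} \leq 2L \sup_{z \in S^{d-1}} \|\inr{X,z}\|_{L_2} = 2L\sqrt{\lambda_1}, \qquad d_F = \sqrt{\lambda_1},
\]
and, writing $G_z = \inr{\tilde G, z}$ for $\tilde G \sim N(0,\Sigma_X)$,
\[
\E \sup_{z \in S^{d-1}} G_z = \E \|\tilde G\|_2 \leq \sqrt{{\rm Tr}(\Sigma_X)}.
\]

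The main obstacle is the distance oracle required by Assumption~\ref{ass:iso-distance-orace}. In this setup it amounts to producing, from the corrupted sample, a PSD matrix $A$ with $\kappa^{-2} \Sigma_X \preceq A \preceq \kappa^2 \Sigma_X$ for some universal $\kappa$; one then sets $\rho(z,z') = \sqrt{\inr{A(z-z'),z-z'}}$. The plan is to split the sample in half and use the first half together with a robust constant-factor covariance estimator (valid under $L_4$--$L_2$ norm equivalence and resilient to $\eta N$ corruptions, as developed e.g.\ in \cite{abdalla2022covariance,lugosi2019risk}) to produce $A$ with probability $1-\delta/2$. Such an estimator exists precisely when $N \gtrsim {\rm Tr}(\Sigma_X) + \log(1/\delta)$, which is exactly the sample size assumption in the statement. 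The second half of the data, together with $\rho$, is then fed into Theorem~\ref{thm:main.corrpution} with confidence $\delta/2$.

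Combining the bound of Theorem~\ref{thm:main.corrpution} with the quantities above and factoring out $\sqrt{\lambda_1}$ gives
\[
\sup_{z \in S^{d-1}} |\Psi_{\delta/2,\eta}(Y_1,\dots,Y_N,z) - \E \inr{X,z}^2|
\leq c(L) \lambda_1 \left(\sqrt{\frac{{\rm Tr}(\Sigma_X)}{\lambda_1 N}} + \sqrt{\frac{\log(1/\delta)}{N}} + \sqrt\eta \right) \equiv \varepsilon,
\]
which is precisely the right-hand side of \eqref{eq:AZ} since $\kappa$ has been absorbed into an absolute constant. To conclude, I would apply the conversion described between \eqref{eq:cov-est-0} and \eqref{eq:cov-est-1}: define $\widehat\Sigma_{\delta,\eta}$ to be any PSD matrix such that $\sup_{z \in S^{d-1}} |\inr{\widehat\Sigma_{\delta,\eta}\, z, z} - \Psi_{\delta/2,\eta}(Y_1,\dots,Y_N,z)| \leq \varepsilon$, which exists because $\widehat\Sigma_{\delta,\eta}=\Sigma_X$ satisfies this bound. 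The one-line triangle inequality argument already carried out in the excerpt then gives $\|\widehat\Sigma_{\delta,\eta} - \Sigma_X\|_{\rm op} \leq 2\varepsilon$. The most delicate point of the argument is the construction of $A$: one needs an initial estimator that simultaneously achieves the sharp sample complexity $N \sim {\rm Tr}(\Sigma_X)+\log(1/\delta)$, tolerates an $\eta$-fraction of corruptions, and requires no a priori bound on the scale of $\Sigma_X$.
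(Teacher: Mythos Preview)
The core of your argument---taking $F=\{\inr{\cdot,z}:z\in S^{d-1}\}$, $u(t)=t^2$, $v(t)=t$, computing $R(F)\leq 2L\sqrt{\lambda_1}$, $d_F=\sqrt{\lambda_1}$, bounding the Gaussian width by $\sqrt{{\rm Tr}(\Sigma_X)}$, applying Theorem~\ref{thm:main.corrpution}, and then converting the functional $\Psi$ to a PSD matrix via \eqref{eq:cov-est-0}--\eqref{eq:cov-est-1}---is exactly what the paper does. However, the paper does not claim to \emph{prove} Theorem~\ref{thm:AZ}: it is quoted as a result of \cite{abdalla2022covariance}, and the paper only shows that Theorem~\ref{thm:main.corrpution} \emph{recovers} the bound ``with the caveat of Assumption~\ref{ass:iso-distance-orace}'', with constants that then depend on $\kappa$ as well as on $L$.

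Your proposal goes further and tries to manufacture the distance oracle from the data, thereby removing the caveat and the $\kappa$-dependence. That extra step is circular as written: you invoke a robust constant-factor covariance estimator from \cite{abdalla2022covariance} to build $A$, but \cite{abdalla2022covariance} \emph{is} the source of Theorem~\ref{thm:AZ}. Moreover, even granting such an estimator, an additive guarantee $\|A-\Sigma_X\|_{\rm op}\leq c\lambda_1(\ldots)$ does not in general yield the multiplicative sandwich $\kappa^{-2}\Sigma_X\preceq A\preceq\kappa^2\Sigma_X$: for that one needs the error to be small relative to the \emph{smallest} eigenvalue of $\Sigma_X$, which the sample-size condition $N\gtrsim {\rm Tr}(\Sigma_X)+\log(1/\delta)$ does not ensure. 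You rightly flag this as the delicate point; the paper simply sidesteps it by keeping Assumption~\ref{ass:iso-distance-orace} as a standing hypothesis rather than deriving it.
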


\begin{remark}
Theorem \ref{thm:AZ} is formulated here in the `hardest' regime, when $X$ satisfies only $L_4-L_2$ norm equivalence.
It is shown in \cite{abdalla2022covariance} that the error in \eqref{eq:AZ} is the best one can hope for in that case, but the dependence on $\eta$ can be improved if $X$ satisfies stronger norm equivalence assumptions, for example, the $L_p-L_4$ norm equivalence for some $p>4$.
\end{remark}

Theorem \ref{thm:main.corrpution} can be used to recover the optimal estimate from Theorem \ref{thm:AZ} using a much simpler argument, though with the caveat of  Assumption \ref{ass:iso-distance-orace}. 

\vskip0.3cm

Set $u(t)=t^2$ and then $v(t) = t$ is a valid choice.
Let
$F=\{\inr{\cdot,z} : z \in S^{d-1}\}$, and note that, by \eqref{eq:cov.rep},
\[
d_F =\sup_{z \in S^{d-1}} \|\inr{X,z}\|_{L_2} = \sqrt{\lambda_1}.
\] 
By the $L_4-L_2$ norm equivalence, 
\begin{align*}
R(F) 
=  2\sup_{z \in S^{d-1}} \left(\E \inr{X,z}^4\right)^{1/4} 
\leq 2 L\sqrt{\lambda_1},
\end{align*}
and because $G$ has the same covariance as $X$, 
$$
\E\sup_{z \in S^{d-1}} \inr{G,z} \leq ( \E\|G\|_2^2)^{1/2} = (\E \|X\|_2^2 )^{1/2} = \sqrt{{\rm Tr}(\Sigma_X)}. 
$$

Therefore, it follows from Theorem \ref{thm:main.corrpution} that there exists $\Psi_{\delta,\eta}$ that satisfies, with probability at least $1-\delta$,  
\begin{align*}
 &\sup_{z\in S^{d-1}}\left| \Psi_{\delta,\eta}(Y_1,...,Y_N,\inr{\cdot, z}) - \E \inr{X,z}^2 \right| \\
&\qquad\leq c(L,\kappa) \sqrt{\lambda_1} \left( \sqrt \frac{ {\rm Tr}(\Sigma_X)}{N} +  \sqrt{\lambda_1} \sqrt{\frac{\log(1/\delta)}{N}} + \sqrt{\lambda_1} \sqrt{\eta} \right)
=(\ast).
\end{align*}
Set $\widehat{\Sigma}_{\delta,\eta}$ to be any symmetric positive semi-definite matrix for which 
\[
\sup_{z\in S^{d-1}}\left| \inr{ \widehat{\Sigma}_{\delta,\eta} \,z, z} - \Psi_{\delta,\eta}(Y_1,...,Y_N,\inr{\cdot, z})  \right| \leq (\ast),
\]
and thus $\|\widehat{\Sigma}_{\delta,\eta} - \Sigma_X\|_{\rm op}\leq 2(\ast)$ by \eqref{eq:cov.triangle}.
\qed

\section{Computation considerations}
\label{sec:computation}

The mean estimation procedures considered in this article rely on the existence of an almost optimal admissible sequence $(F_s)_{s \geq 0}$ for $\gamma_2(F, \|\cdot\|_{L_2})$---that is, a sequence satisfying
\[
\sup_{f \in F} \sum_{s \geq 0} 2^{s/2} \|f - \pi_s f\|_{L_2}
\leq c(\kappa) \cdot \gamma_2(F, \|\cdot\|_{L_2}).
\]
To construct such a sequence, it is assumed that the statistician has access to a symmetric function $\rho$ that is equivalent to the $L_2$ norm; see Assumption~\ref{ass:iso-distance-orace}.
While this assumption guarantees the theoretical existence of such an admissible sequence, its \emph{practical construction} is, in general, a highly nontrivial task.

Nevertheless, as we explain in this section, there are various concrete and relevant examples for which explicit constructions are either known or feasible. Moreover, even when an almost optimal sequence for $\gamma_2(F, \|\cdot\|_{L_2})$ is not available, one can often construct a (slightly) sub-optimal admissible sequence that still leads to mean estimation procedures with a nearly optimal error bound.

To illustrate this fact, consider the case of a class of linear functionals on $\R^d$, that is, each $f\in F$ is of the form  $f(X) = \inr{X, t}$ for $t \in T\subset \R^d$, and $X \in \mathbb{R}^d$ is an isotropic random vector. 
In this setting, we have
\[
\gamma_2(F, \|\cdot\|_{L_2}) = \gamma_2(T, \|\cdot\|_2),
\]
and  an almost optimal admissible sequence for $\gamma_2(T, \|\cdot\|_2)$ is known for various sets $T$. 
These include $\ell_p^d$-balls (for $p \in [1, \infty]$), weighted $\ell_p^d$-balls, localized versions thereof (e.g., intersections with centred Euclidean balls of arbitrary radius), ellipsoids, and even certain more general convex bodies \cite{talagrand2022upper,van2018chaining,van2018chainingII}.

In particular, a suitable admissible sequence is always known in the  context of Theorem~\ref{thm:AZ},  because
\[
\gamma_2(B_2, \|\cdot\|_{L_2}) = \gamma_2\left(\Sigma_X^{1/2} B_2, \|\cdot\|_2\right),
\]
and the fact that $\Sigma_X^{1/2} B_2$ is an ellipsoid, that, at the `isomorphic level' is par of the information given to the statistician. 

Beyond such cases where almost optimal admissible sequences are explicitly known, one may resort to Dudley's entropy integral (${\rm D}_2$) to construct admissible sequences. 
To that end, first note that
\[
\gamma_2(F, d) 
\leq \sum_{s \geq 0} 2^{s/2} e_s(F, d)
\sim {\rm D}_2(F, d) = \int_0^\infty \sqrt{ \log \mathcal{N}(F, d, \varepsilon) }\, d\varepsilon,
\]
 where $e_s(F, d)$ is the smallest achievable covering error of $F$ with $2^{2^s}$ points, and $\mathcal{N}(F, d, \varepsilon)$ is the minimal number of $\varepsilon$-balls needed to cover $F$ with respect to the metric $d$.
 
The problem of constructing a sequence $(F_s)_{s\geq 0}$ that nearly achieves the optimal $e_s(F,d)$ is significantly simpler than constructing an almost optimal admissible sequence.
For example, there are standard constructions for all the aforementioned examples (e.g., $\ell_p^d$-balls and ellipsoids when vied as classes of linear functionals), as well as more complex, infinite-dimensional examples---such as classes of Lipschitz functions,  functions with Besov-type regularity, or parametrized classes of functions \cite{vaart1996weak}.

If our mean estimation procedure is defined using an admissible sequence $(F_s)_{s \geq 0}$ that nearly achieves the values $e_s(F, \|\cdot\|_{L_2})$---i.e.\ satisfies that
\[ \sum_{s\geq 0} \sup_{f\in F}  2^{s/2} \|f-\pi_sf \|_{L_2} \leq c(\kappa) \cdot {\rm D}_2(F,\|\cdot\|_{L_2}),\] 
---then Theorem~\ref{thm:main} remains valid with  $\E \sup_{f \in F} G_f \sim \gamma_2(F, \|\cdot\|_{L_2})$ replaced by ${\rm D}_2(F, \|\cdot\|_{L_2})$. 
Specifically, in the setting of Theorem~\ref{thm:main} and using its notation, for every $\delta > \exp(-c_1 N)$, there exists a procedure $\Psi_\delta'$ such that, with probability at least $1 - \delta$,
\[
\sup_{f \in F} \left| \Psi_\delta'(X_1, \dots, X_N, f) - \E u(f) \right|
\leq c_2 R(F) \cdot \left( \frac{{\rm D}_2(F, \|\cdot\|_{L_2})}{\sqrt{N}} + d_F \sqrt{ \frac{\log(1/\delta)}{N} } \right).
\]

Finally, while the bound $\gamma_2(F, \|\cdot\|_{L_2}) \lesssim {\rm D}_2(F, \|\cdot\|_{L_2})$ is not always tight, it is often sharp up to logarithmic factors. 
For instance, when $F$ is a class of linear functional on $\R^d$, it is straightforward to show that 
\[
{\rm D}_2(F, \|\cdot\|_{L_2}) \leq c \gamma_2(F, \|\cdot\|_{L_2}) \cdot \log(d).
\]
In such cases, with  probability at least $1 - \delta$,
\[
\sup_{f \in F} \left| \Psi_\delta'(X_1, \dots, X_N, f) - \E u(f) \right|
\leq c_3 R(F) \cdot \left( \frac{ \E \sup_{f \in F} G_f \cdot \log(d) }{\sqrt{N}} + d_F \sqrt{ \frac{\log(1/\delta)}{N} } \right),
\]
and the admissible sequence used in the definition of the procedure $\Psi_\delta'$ can be constructed.

\appendix

\section{The one-dimensional Median of Means}
\label{sec:MOM.1d}
Here we sketch the proofs that the Median of Means estimator satisfies the claims made in  Theorem \ref{thm:mean-estimation} and Theorem \ref{thm:mean-corrupt}, starting with the former.

Let $c_1$ be a suitable absolute constant.
Split the sample into $m = c_1 \log(\frac{1}{\delta})$ disjoint blocks of equal size  $n =\frac{N}{m}$ and let $\bar Y_j$ denote the empirical mean on block $j$ (assume without loss of generality that $n$ and $m$ are integers).
Define $\psi_{\delta}$ as the median of $(\bar Y_j)_{j=1}^m$.

Since $\bar Y_j$ is the mean of $n$ i.i.d.\ copies of $X$, Markov's inequality implies that
\[
\P\left(|\bar Y_j-\mathbb EX| \leq  4 \frac{ \sigma_X}{\sqrt n} \right) \geq \frac{15}{16},
\]
and, by  Hoeffding's inequality, with probability at least $1-\exp(-c_2m)=1-\delta$, 
\begin{align}
\label{eq:mom}
\left|\left\{ j\leq m :  |\bar Y_j-\mathbb EX| \leq 4 \frac{ \sigma_X}{\sqrt n}  \right\} \right| \geq 0.8m.
\end{align}
On that event,  $ |\psi_{\delta}-\mathbb EX| \leq 4 \frac{ \sigma_X}{\sqrt n}$, and the proof of Theorem \ref{thm:mean-estimation} follows from the definition of $n$. 

In the case of corruption (with level $\eta$), set $\delta'=\min\{\delta, \exp(-\frac{10\eta N}{c_1})\}$ and split the sample into $m' = c_1\log(\frac{1}{\delta'})= \max\{c_1\log(\frac{1}{\delta}), 10\eta N\}$ blocks, each of cardinality $n'=\frac{N}{m'}$.
Then there are no corrupted samples in least $0.9m'$ of the blocks, and the above arguments show with probability at least $1-\exp(-c_2m')\geq 1-\delta$, \eqref{eq:mom} holds with  $0.8m$ and $n$ replaced by $0.7m'$ and $n'$, respectively. 
Hence, denoting  the median of $(\bar Y_j)_{j=1}^{m'}$ by  $\psi_{\delta,\eta}$, it follows that with probability at least $1-\delta$, 
\[ |\psi_{\delta,\eta}-\mathbb EX|
 \leq 4 \frac{ \sigma_X}{\sqrt{n'}}
 =4\sigma_X\max\left\{ \sqrt\frac{c_1\log(1/\delta)}{N}, \sqrt{10\eta} \right\} ,  \]
proving Theorem \ref{thm:mean-corrupt}.

\vspace{1em}

\noindent
{\bf Acknowledgement:} This research was funded in whole or in part by the Austrian Science Fund (FWF) [doi: 10.55776/P34743 and 10.55776/ESP31], the Austrian National Bank [Jubil\"aumsfond, project 18983],  and a Presidential Young Professorship grant [`Robust statistical learning for complex data'].

\bibliographystyle{abbrv}

\end{document}